\setlist[itemize]{itemsep=0pt,parsep=2pt,topsep=2pt}
\setlist[enumerate]{itemsep=0pt,parsep=2pt,topsep=2pt}
\newtheorem{theorem}{Theorem}[section]
\newtheorem{prop}[theorem]{Proposition}
\newtheorem{lemma}[theorem]{Lemma}
\newtheorem{assumption}[theorem]{Assumption}
\theoremstyle{remark}
\theoremstyle{definition}
\newtheorem{example}{Example}[section]
\newtheorem{definition}[theorem]{Definition}
\theoremstyle{plain}
\newtheorem{myremark}{Remark}[section]
\numberwithin{equation}{section}
 \def\argmin{\mathop{\arg\min}}
 \def\A{\mathbb{A}}
 \def\X{\mathbb{X}}
 \def\E{\mathbb{E}}
 \def\R{\mathbb{R}}
 \def\C{\mathcal{C}}
 \def\F{\mathcal{F}}
 \def\P{\mathcal{P}}
 \def\B{\mathcal{B}}
  \def\0{\mathbf{0}}
 \def\1{\mathbf{1}}
 \def\ind{\mathbb{1}}
\def\iJ{\underline{J}}
\def\Pr{\mathbf{P}}
\def\proj{\mathop{\text{\rm proj}}}
\def\Z{\mathbb{Z}}
\def\Q0{{\mathbb{Q}_0}}
\def\N{\mathcal{N}}
\def\W{\mathcal{W}}
\def\wto{\mathop{\overset{\text{\tiny \it w}}{\rightarrow}}}
\def\myqed{\hfill \qed}
\begin{document} 
\markboth{On Minimum Pair Approach for Average-Cost MDP}{}

\title{On the Minimum Pair Approach for Average-Cost\\ Markov Decision Processes with Countable Discrete Action Spaces and Strictly Unbounded Costs\thanks{This research was funded by DeepMind and Alberta Innovates---Technology Futures.}
\thanks{This paper includes and extends the author's earlier results given in the arXiv eprint \cite[Section 4]{Yu19} (version 1).}}

\author{Huizhen Yu\thanks{RLAI Lab, Department of Computing Science, University of Alberta, Canada (\texttt{janey.hzyu@gmail.com})}}
\date{}

\maketitle

\begin{abstract}
We consider average-cost Markov decision processes (MDPs) with Borel state spaces, countable, discrete action spaces, and strictly unbounded one-stage costs. For the minimum pair approach, we introduce a new majorization condition on the state transition stochastic kernel, in place of the commonly required continuity conditions on the MDP model. We combine this majorization condition with Lusin's theorem to prove the existence of a stationary minimum pair, i.e., a stationary policy paired with an invariant probability measure induced on the state space, with the property that the pair attains the minimum long-run average cost over all policies and initial distributions. We also establish other optimality properties of a stationary minimum pair, and for the stationary policy in such a pair, under additional recurrence or regularity conditions, we prove its pathwise optimality and strong optimality. Our results can be applied to a class of countable action space MDPs in which the dynamics and one-stage costs are discontinuous with respect to the state variable.
\end{abstract}

\bigskip
\bigskip
\bigskip
\noindent{\bf Keywords:}\\
Markov decision processes; Borel state space; countable actions; minimum pair;\\ 
strong and pathwise average-cost optimality; majorization condition


\clearpage
\tableofcontents

\clearpage
\section{Introduction}

We study discrete-time Markov decision processes (MDPs) under the long-run average cost criteria. 
Specifically, we consider MDPs with Borel state spaces, countable action spaces with the discrete topology, and one-stage costs that are nonnegative and strictly unbounded. We study optimality properties of these MDPs by using the minimum pair approach.  

A minimum pair for an MDP is a policy and an initial state distribution with the property that the pair attains the minimum (limit superior) expected average cost over all policies and initial state distributions. Of interest is the existence of a minimum pair with special structures, in particular, a stationary policy and an invariant probability measure induced by the policy on the state space. We shall call such a pair a `stationary minimum pair' in this paper. These pairs are interesting because if they exist, then the stationary policy from a pair is not only average-cost optimal for the initial distribution it pairs with, but also, under additional conditions, pathwise optimal for all initial distributions (see Lasserre~\cite{Las99}, Vega-Amaya~\cite{VAm99}, and also the book by Hern\'{a}ndez-Lerma and Lasserre~\cite[Chap.~11.4]{HL99}).

The minimum pair approach we take in this paper was introduced by Kurano~\cite{Kur89} for Borel-space MDPs, motivated by the idea in the earlier work of Borkar~\cite{Bor83,Bor84}, who analyzed average-cost problems based on occupancy measures of the policies.
Unlike the vanishing discount factor approach with which one studies the average cost problem as the limiting case of the discounted problems (cf.~\cite{FKZ12,Sch93,Sen89}), it is a direct approach.
It also differs from a linear programming-based method that searches for a minimum pair among all induced invariant probability measures (cf.~\cite{Den70}). The minimum pair approach is an analysis technique that can be applied to investigate questions such as the existence of stationary optimal policies, for infinite-space MDPs where it is not known a priori if any stationary minimum pair exists or if any stationary policy can induce a stationary Markov chain. The results from this approach can shed light on the optimality properties of average-cost MDPs and moreover, provide the basis for the linear programming approach to studying these problems (cf.~\cite{HL02}).

With the minimum pair approach, Kurano~\cite{Kur89} first considered bounded costs and compact spaces; Hern\'{a}ndez-Lerma \cite{HLe93}, Lasserre~\cite{Las99} and Vega-Amaya \cite{VAm99} subsequently analyzed the case of strictly unbounded costs, with pathwise optimality being the focus of \cite{Las99,VAm99} (see also the books \cite[Chap.~5.7]{HL96}, \cite[Chap.\ 11.4]{HL99}). These prior results all concern lower semicontinuous Borel state and action space MDP models, in which the state transition stochastic kernel is (weakly) continuous and the one-stage cost function is lower semicontinuous. 

Our results are for a countable, discrete action space and strictly unbounded costs. They are analogous to the prior results just mentioned; however, they do not require continuity conditions on the MDP model. Instead we introduce a new majorization condition on the state transition stochastic kernel of the MDP. 
This condition, roughly speaking, requires the existence of finite Borel measures on the state space that can majorize certain sub-stochastic kernels created from the state transition stochastic kernel, at all admissible state-action pairs (see Assumption \ref{cond-pc-3}(M)). Our main idea is to use those majorizing finite measures in combination with Lusin's theorem (see Theorem~\ref{thm-lusin}), which then allows us to extract arbitrarily large sets (large as measured by a given finite measure) on which certain functions involved in our analysis have desired continuity properties.   
By using this technique with the minimum pair approach, we obtain optimality results that can be applied to a class of countable action space MDPs in which, with respect to (w.r.t.)~the state variable, the dynamics and one-stage costs are discontinuous.

The main results of this paper are as follows:
\begin{itemize}[leftmargin=0.45cm,labelwidth=!]
\item We prove the existence of a stationary minimum pair under the new majorization condition (see Assumption~\ref{cond-pc-3}, Prop.~\ref{prp-su-mp}, and Theorem~\ref{thm-su-mp}).
\item We relate the minimum average cost to the limit of the minimum discounted costs as the discount factor vanishes (see Prop.~\ref{prp-malpha}).
\item We also establish other optimality properties of the stationary policy from a stationary minimum pair, in terms of the limit inferior expected average costs and the pathwise average costs (see Theorem~\ref{thm-su-mp2}(a)). These optimality properties are then used, under additional positive Harris recurrence or $f$-regularity conditions on the induced Markov chain, to establish pathwise optimality and strong optimality of the stationary policy for the average-cost MDP (see Theorem~\ref{thm-su-mp2}(b)). 
\end{itemize}
These results can be compared with the minimum pair results for lower semicontinuous models in \cite{HLe93,HL96,HL99,Kur89,Las99,VAm99}. We have also studied the implications of the majorization condition for the linear programming approach and obtained duality results that are comparable with those known in the literature for lower semicontinuous MDP models. These results are reported in a separate paper \cite{Yu-lp19}.
 
Our scope is still limited, however, because with our current proof techniques, we can only handle countable action spaces with the discrete topology. Future research will be to try to extend this work to Borel action spaces and universally measurable policies 
(\cite{ShrB79}; \cite[Part II]{bs}).

We remark that Lusin's theorem has been used earlier in a similar way by the author to tackle measurability-related issues in policy iteration for a lower semicontinuous Borel-space MDP model under discounted and total cost criteria~\cite[Sec.~6]{YuB-mvipi}. The average-cost minimum pair problem we address in this paper and the other arguments involved in our analysis are entirely different from those in \cite{YuB-mvipi}, however.

We also mention our related recent work based on the same majorization idea. In~\cite{Yu19} we have introduced another majorization condition to work with the vanishing discount factor approach. As in this paper, that majorization condition is used, instead of the commonly required continuity/compactness conditions on the MDP model, to prove the average cost optimality inequality (ACOI) for MDPs with Borel state and action spaces and universally measurable policies.

The rest of this paper is organized as follows. Section~\ref{sec-2} introduces basic definitions and notations. Section~\ref{sec-3} presents the majorization condition and the main results; it also includes discussion and illustrative examples of the results and the assumptions involved. 
The proofs are given in Section~\ref{sec-4}. Background material about Harris recurrent and regular Markov chains is included in Appendix~\ref{appsec-1}.

\section{Preliminaries} \label{sec-2}
For any metrizable topological space $X$, let $\B(X)$ denote the Borel $\sigma$-algebra and $\P(X)$ the set of probability measures on $\B(X)$, and endow the space $\P(X)$ with the topology of weak convergence.
A \emph{Borel space} (a.k.a.\ standard Borel space) is a separable metrizable topological space that is homeomorphic to a Borel subset of some Polish space (a separable and completely metrizable topological space). If $X$ and $Y$ are Borel spaces, a \emph{Borel measurable stochastic kernel} on $Y$ given $X$, denoted $q(dy \,|\, x)$, is a Borel measurable function from $X$ into $\P(Y)$; equivalently, it is a family of Borel probability measures on $Y$ parametrized by $x$ such that for each $B \in \B(Y)$, the function $q(B \mid \cdot): X \to [0, 1]$ is Borel measurable (see \cite[Def.\ 7.12 and Prop.\ 7.26]{bs}). The stochastic kernel $q(dy \,|\, x)$ is called \emph{continuous} if and only if (iff) it is a continuous function from $X$ into $\P(Y)$ (this case is also called \emph{weak Feller} or \emph{weakly continuous} in the literature).
 
We consider a standard MDP model that has a Borel space $\X$ as its state space and a countable space $\A$ endowed with the discrete topology as its action space.
At a state $x \in \X$, the set of admissible actions is nonempty and denoted by $A(x)$. 
The set-valued map $A: x \mapsto A(x)$ specifies the control constraint in the MDP. 
We assume that its graph $\Gamma := \{(x, a) \mid x \in \X, a \in A(x)\}$ is a Borel subset of $\X \times \A$. 
At a state $x$, taking action $a \in A(x)$ results in a one-stage cost $c(x,a)$ and a probabilistic state transition.
We assume that the state transition is governed by a Borel measurable stochastic kernel $q(dy \,|\, x, a)$ on $\X$ given $\X \times \A$,
and that the one-stage cost function $c: \X \times \A \to [0, +\infty]$ is nonnegative and Borel measurable, finite-valued on $\Gamma$ and taking the value $+\infty$ outside $\Gamma$. 
Later we will impose more conditions on the MDP to study its optimality properties for average cost criteria.

A policy of an MDP consists of a sequence of stochastic kernels on $\A$ that specify for each stage, which admissible actions to take, given the history up to that stage. 
In particular, we consider Borel measurable policies in this paper and often we shall simply call them policies. A \emph{Borel measurable policy} is a sequence $\pi : =(\mu_0, \mu_1, \ldots)$ where for each $n \geq 0$,
$\mu_n\big(da_n \!\mid x_0, a_0, \ldots, a_{n-1}, x_n \big)$ is a Borel measurable stochastic kernel on $\A$ given $(\X \times \A)^{n} \times \X$ and obeys the control constraint of the MDP:
$$ \mu_n\big(A(x_n) \!\mid x_0, a_0, \ldots, a_{n-1}, x_n \big) = 1, \quad \forall \, (x_0, a_0, \ldots, a_{n-1}, x_n) \in (\X \times \A)^n \times \X.$$
(For notational simplicity, although $\A$ is countable, we shall write probability measures on $\A$ in the same way as we do for the possibly uncountably infinite spaces $\X$ and $\X \times \A$.) 
A policy $\pi$ is \emph{stationary} if for all $n \geq 0$, the function $(x_0, a_0, \ldots, a_{n-1}, x_n) \mapsto  \mu_n(d a_n \!\mid\! x_0,  a_0, \ldots, a_{n-1}, x_n)$ depends only on the state $x_n$ and is also independent of the stage $n$. In this case, the policy can be expressed as $\pi = (\mu, \mu, \ldots)$ for a Borel measurable stochastic kernel $\mu(da \,|\, x)$ on $\A$ given $\X$ that obeys the control constraint of the MDP, and we will simply designate this policy by $\mu$. Our results in this paper will center around stationary policies.

Let $\Pi$ denote the space of Borel measurable policies and $\Pi_s$ the subset of all stationary policies.
\footnote{These sets are nonempty since $\A$ is countable and discrete. For example, if we label the elements of $\A$ by $1, 2, \ldots$ and let $A_k = \{ x \in \X \mid k \in A(x) \}$ for $k \geq 1$, then applying action $k$ on the set of states $A_k \setminus \cup_{j < k} A_j$ defines a Borel measurable stationary policy in $\Pi_s \subset \Pi$.}
We consider infinite horizon problems. A policy $\pi \in \Pi$ and an initial (state) distribution $\zeta \in \P(\X)$ induce a stochastic process $\{(x_n, a_n)\}_{n \geq 0}$ on the product space $(\X \times \A)^\infty$, and the probability measure for this process is uniquely determined by $\zeta$, the sequence of stochastic kernels in $\pi$, and the state transition stochastic kernel $q(dy \,|\, x, a)$ \cite[Prop.~7.28]{bs}. We denote this probability measure by $\Pr^\pi_\zeta$ and the corresponding expectation operator by $\E^\pi_\zeta$. For notational simplicity, when $\zeta$ is the Dirac measure $\delta_x$ concentrated at a single initial state $x$, we shall often write $x$ in place of $\delta_x$.

The $n$-stage expected total cost of $\pi$ for an initial distribution $\zeta$ is given by 
$$ J_n(\pi, \zeta) : = \E^\pi_\zeta \big[ \, \textstyle{\sum_{k=0}^{n-1} c(x_k, a_k)} \, \big],$$
and the \emph{limit superior} and \emph{limit inferior} \emph{(expected) average costs} of $\pi$ are given, respectively, by
$$ J(\pi,\zeta) : = \limsup_{n \to \infty} J_n(\pi, \zeta) /n, \qquad \iJ(\pi,\zeta) : = \liminf_{n \to \infty} J_n(\pi, \zeta) /n.$$ 
We also consider the average costs of $\pi$ along a sample path $(x_0, a_0, x_1, a_1, \ldots)$:
$$ \hat J(\pi, \zeta) : = \limsup_{n \to \infty} \, n^{-1} \sum_{k=0}^{n-1} c(x_k, a_k), \qquad  \underline{\hat{J}}(\pi, \zeta) : = \liminf_{n \to \infty} \, n^{-1} \sum_{k=0}^{n-1} c(x_k, a_k).$$ 
They are nonnegative random variables whose distributions depend on $\pi$ and the initial distribution $\zeta$. 
We will refer to them as the \emph{pathwise average costs} of $\pi$.

We consider several optimality criteria for the average cost problem. The standard notion of optimality is defined w.r.t.\ the limit superior average costs.

\begin{definition}[some notions of average-cost optimality] \label{def-ac-opt} \rm \hfill
\begin{enumerate}[leftmargin=0.65cm,labelwidth=!]
\item[(a)] We call a policy $\pi^*$ \emph{average-cost optimal} iff 
$$ J(\pi^*, x) \leq J(\pi, x), \quad \forall \, \pi \in \Pi, \ x \in \X;$$
and \emph{strongly average-cost optimal} iff 
$$ J(\pi^*, x) \leq \iJ(\pi, x), \quad \forall \, \pi \in \Pi, \ x \in \X.$$
\item[(b)] We call a policy $\pi^*$ \emph{pathwise average-cost optimal} iff for every $\zeta \in \P(\X)$, there exists a constant $\rho_\zeta \geq 0$ such that
$$ \hat J(\pi^*, \zeta) = \rho_\zeta, \qquad \text{$\Pr^{\pi^*}_\zeta$-almost surely}, $$
whereas for every other policy $\pi$, 
$$ \hat J(\pi, \zeta) \geq \rho_\zeta, \qquad \text{$\Pr^\pi_\zeta$-almost surely}. $$
\end{enumerate}
\end{definition}

With the minimum pair approach, we tackle theoretical questions regarding optimality properties of average-cost MDPs by focusing on the \emph{minimum average cost} over all policies and initial distributions,
$$ \rho^* : = \inf_{\zeta \in \P(\X)} \inf_{\pi \in \Pi} J(\pi, \zeta),$$ 
and those policies that can attain $\rho^*$ for some initial distributions. 
Among such policies, of special interest are stationary policies whose induced Markov chains possess invariant probability measures, for the ergodic theory for general state space Markov chains can then be applied to analyze the induced stationary Markov chains and help in the study of the behavior and optimality properties of such policies. This is the motivation behind the following definitions.

\begin{definition}[minimum pair] \rm 
A pair $(\pi^*, \zeta^*)$ of policy and initial distribution is called a \emph{minimum pair} iff 
$J(\pi^*, \zeta^*) = \rho^*$. 
\end{definition}

{\samepage
\begin{definition}[stationary pair and stationary minimum pair] \rm \hfill
\begin{enumerate}[leftmargin=0.65cm,labelwidth=!]
\item[(a)] For a stationary policy $\mu \in \Pi_s$ and an initial distribution $p \in \P(\X)$, if $p$ is an invariant probability measure of the Markov chain induced by $\mu$ on $\X$, we call $(\mu, p)$ a \emph{stationary pair}. 
The set of all stationary pairs is denoted by $\Delta_s$.
\item[(b)] If $(\mu^*, p^*) \in \Delta_s$ is a minimum pair, we call it a \emph{stationary minimum pair}.
\end{enumerate}
\end{definition}}

From the ergodic theory for stationary processes and the minimality of $\rho^*$, it is known that if $\rho^* < \infty$ and if a stationary minimum pair $(\mu^*, p^*)$ exists, it satisfies
\begin{equation} \label{eq-opt-spair}
       J(\mu^*, x)  = \iJ(\mu^*, x) = \rho^*, \qquad \text{for $p^*$-almost all } x \in \X
\end{equation}
(see \cite[Prop.~11.4.4]{HL99}). We will focus primarily on the existence question and other optimality properties of a stationary minimum pair.

The stationary policy $\mu^*$ in a stationary minimum pair need not be optimal w.r.t.\ any one of the average-cost optimality criteria in Definition~\ref{def-ac-opt}, as can be seen by comparing Definition~\ref{def-ac-opt} with (\ref{eq-opt-spair}). Also, the set of initial states for which $\mu^*$ attains the minimum average cost $\rho^*$ can be small, if the support of $p^*$ is small. To ensure that $\mu^*$ is optimal for all initial states, it is not enough that the induced Markov chain has invariant probability measures. We will need the Markov chain to have stronger ergodic properties. 
So, near the end of our analysis, we will consider two classes of Markov chains, positive Harris recurrent and $f$-regular Markov chains (see Appendix~\ref{appsec-1} for their definitions). We will use them as conditions on $\mu^*$ and combine their ergodic properties with other properties of a stationary minimum pair to obtain the average-cost optimality of $\mu^*$.

\section{Main Results: Existence of Stationary Minimum Pair and its Optimality Properties} \label{sec-3}

We start with the main assumption for our results.
Recall that $\Gamma = \big\{ (x, a) \mid x \in \X, a \in A(x)\big\}$ is the graph of the control constraint of an MDP. If $B \subset \X \times \A$, let $\proj_\X(B)$ (resp.~$\proj_\A(B)$) 
denote the projection of $B$ on $\X$ (resp.~$\A$). 
The complement of a set $B$ in some space is denoted by $B^c$.

\begin{assumption}
\label{cond-pc-3} \hfill
\begin{enumerate}[leftmargin=0.85cm]
\item[\rm (G)] For some $\pi \in \Pi$ and $\zeta \in \P(\X)$, the average cost $J(\pi,\zeta) < \infty$.
\item[\rm (SU)] There exists a nondecreasing sequence of compact sets $\Gamma_j \uparrow \Gamma$ such that
$$ \lim_{n \to \infty} \inf_{(x, a) \in \Gamma_j^c} c(x, a) = + \infty.$$
\item[\rm (M)] For each compact set $K \in \{\proj_\X(\Gamma_j)\}$, there exist an open set $O \supset K$, a closed set $D \subset \X$, and a finite measure $\nu$ on $\B(\X)$ (all of which can depend on $K$) such that
\begin{equation}
     q\big((O \setminus D) \cap B  \mid x, a \big) \leq \nu(B), \qquad \forall \, B \in \B(\X), \ (x, a) \in \Gamma, \label{eq-maj-minpair}
\end{equation}     
where the closed set $D$ (possibly empty) is such that restricted to $D \times \A$, the state transition stochastic kernel $q(dy\,|\, x, a)$ is continuous and the one-stage cost function $c$ is lower semicontinuous. 
\end{enumerate}
\end{assumption}

The conditions (G) and (SU) are standard (see \cite[Assumption 11.4.1(a), (c)]{HL99}).
The condition (G) is to exclude vacuous problems. The condition (SU) defines what we mean by strictly unbounded costs. 
It implies the additional constraint on the spaces of the MDPs we consider: $\Gamma =\cup_j \Gamma_j$ must be $\sigma$-compact and hence $\X$ must also be $\sigma$-compact. 
(See \cite[Remark 11.4.2(a3)]{HL99} for a sufficient condition for (SU) that involves an upper-semicontinuous map $A(\cdot)$.)

The condition (M) is the new majorization condition we introduce.
In this condition, roughly speaking, we divide the state space into two parts, a closed set $D$ on which the model has nice continuity properties, and the complement set $D^c$ on which we impose a majorization condition (\ref{eq-maj-minpair}). The condition (M) is satisfied trivially by letting $D = \X$, if the entire model is lower semicontinuous (this means, in the discrete action setting considered here, that for each $a \in \A$, $q(dy \,|\, x, a)$ is continuous in $x$ and $c(x,a)$ is lower semicontinuous in $x$). 

For discontinuous models in general, the condition (M) seems natural in cases where the probability measures $\{q(\cdot \mid x, a) \mid (x, a) \in \Gamma\}$ have densities on the complement set $D^c = \X \setminus D$ w.r.t.\ a common ($\sigma$-finite) reference measure. In such cases, under practical conditions on those density functions, the condition (M) holds; see Example~\ref{ex-2} in Section~\ref{sec-minpair-ex} for an illustration.

We defer a further discussion about the conditions (M) and (SU) to Section~\ref{sec-minpair-ex}.

\subsection{Results} \label{sec-3.1}
We state the main results in this subsection. 

Proposition~\ref{prp-su-mp} and Theorem~\ref{thm-su-mp} establish the existence of a stationary minimum pair. They are analogous to the prior results in \cite{HL96,Kur89} for lower semicontinuous models; in particular, they can be compared with \cite[Theorems 2.1 and 2.2]{Kur89} and \cite[Lemma 5.7.10 and Theorem 5.7.9(a)]{HL96}.

\begin{prop} \label{prp-su-mp}
Under Assumption~\ref{cond-pc-3}, for any pair $(\pi, \zeta) \in \Pi \times \P(\X)$ with $J(\pi, \zeta) < \infty$, there exists a stationary pair $(\bar \mu, \bar p) \in \Delta_s$ with $J(\bar \mu, \bar p) \leq J(\pi, \zeta)$.
\end{prop}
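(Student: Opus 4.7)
The plan is to adapt the classical occupation-measure construction of Kurano/Borkar so that it operates without weak Feller continuity of $q$, using instead the new majorization condition (M) together with Lusin's theorem. I would build empirical state-action measures along $(\pi,\zeta)$, extract a weak limit via the tightness supplied by (SU), disintegrate it into a candidate stationary pair, and verify invariance and the cost bound with the aid of Lusin's theorem applied to the majorizing finite measure $\nu$.

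For $n \geq 1$, set
$$\mu_n(B) \,:=\, \frac{1}{n}\sum_{k=0}^{n-1} \Pr^\pi_\zeta\big((x_k,a_k) \in B\big), \qquad B \in \B(\X\times\A).$$
Since $J(\pi,\zeta)<\infty$ and $c \equiv +\infty$ off $\Gamma$, (SU) yields a Markov-type bound $\mu_n(\Gamma_j^c) \leq (J_n(\pi,\zeta)/n)/\inf_{\Gamma_j^c} c \to 0$ as $j\to\infty$, uniformly in $n$; with $\Gamma_j$ compact this makes $\{\mu_n\}$ tight, so along some $n_k \uparrow \infty$, $\mu_{n_k} \wto \bar\mu \in \P(\X\times\A)$ with $\bar\mu$ supported on $\Gamma$. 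Let $\bar p$ be the $\X$-marginal of $\bar\mu$ and disintegrate $\bar\mu(dx,da) = \bar p(dx)\,\bar\mu(da\mid x)$; the Borel stochastic kernel $\bar\mu(\cdot\mid x)$, viewed as a stationary policy in $\Pi_s$, is the candidate $\bar\mu$. The two remaining obligations are (i) $\bar p$ is $\bar\mu$-invariant, i.e.\ $(\bar\mu,\bar p)\in\Delta_s$, and (ii) $\int c\,d\bar\mu \leq J(\pi,\zeta)$, which coincides with $J(\bar\mu,\bar p)$ by stationarity and the ergodic identity for stationary Markov chains.

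For (i), from the elementary identity
$$\int\!\!\int f(y)\,q(dy\mid x,a)\,\mu_n(dx,da) \,-\, \int f\,d\bar p_n \,=\, \tfrac{1}{n}\big(\E^\pi_\zeta f(x_n) - \E^\pi_\zeta f(x_0)\big) \,\to\, 0$$
for bounded continuous $f$ on $\X$ (where $\bar p_n$ is the $\X$-marginal of $\mu_n$), it suffices to show that $(x,a) \mapsto \int f(y)\,q(dy\mid x,a)$ can be passed through $\mu_{n_k}\wto\bar\mu$. This is where (M) and Lusin intervene: fix $j$ so large that $\mu_n(\Gamma_j^c)$ and $\bar\mu(\Gamma_j^c)$ are uniformly small, set $K = \proj_\X(\Gamma_j)$, and take $O\supset K$, $D$, $\nu$ from (M). On $D\times\A$ the integrand is continuous by the continuity clause of (M). For the $O\setminus D$ piece, Lusin's theorem applied to $f$ and the finite measure $\nu$ produces, for any $\epsilon>0$, a bounded continuous $\tilde f_\epsilon$ on $\X$ with $\nu(\{f\neq \tilde f_\epsilon\}\cap(O\setminus D)) < \epsilon$; by the majorization (\ref{eq-maj-minpair}), the error $\bigl|\int(f-\tilde f_\epsilon)\ind_{O\setminus D}\,q(dy\mid x,a)\bigr|$ is at most $2\|f\|_\infty \epsilon$ uniformly in $(x,a)\in\Gamma$. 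A standard $\epsilon/3$ split over $D\cap O$, $O\setminus D$, and $O^c$ (the last controlled by tightness outside $\Gamma_j$) yields the needed convergence and the balance $\bar p q \bar\mu = \bar p$, i.e.\ invariance of $\bar p$. Claim (ii) follows by the same Lusin device applied to truncations $c\wedge M$: lsc of $c$ on $D\times\A$ plus Lusin-approximation on $O\setminus D$ gives $\int (c\wedge M)\,d\bar\mu \leq \liminf_k \int (c\wedge M)\,d\mu_{n_k} \leq J(\pi,\zeta)$, and letting $M\uparrow\infty$ using (SU) to control the tail completes the bound.

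The main obstacle is (i): passing $(x,a)\mapsto \int f(y)\,q(dy\mid x,a)$ through weak convergence in the absence of Feller continuity of $q$ off $D$. The device that unlocks everything is the pairing of (M) with Lusin's theorem—it yields a uniform-in-$(x,a)\in\Gamma$ continuous approximation whose approximation error is bounded by a $\nu$-small constant, which by (\ref{eq-maj-minpair}) transfers uniformly to the $q$-integral and survives the subsequence limit.
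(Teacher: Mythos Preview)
Your high-level outline (empirical occupation measures, tightness from (SU), weak subsequential limit, disintegration into a candidate $(\bar\mu,\bar p)$, then verify invariance and the cost bound) is exactly the paper's scheme. The gap is in the one step you single out as the crux: how Lusin's theorem and condition (M) are combined to push the discontinuous integrand $\phi(x,a):=\int f(y)\,q(dy\mid x,a)$ through $\mu_{n_k}\wto\bar\mu$.

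Two concrete problems. First, applying Lusin to $f$ is vacuous: $f\in\C_b(\X)$ already, so $\tilde f_\epsilon=f$ satisfies your conclusion with error zero, and nothing is gained. Second, and more importantly, your $\epsilon/3$ split is in the \emph{inner} $y$-variable. Even if you replace $f$ by some $\tilde f_\epsilon$ on $O\setminus D$, the resulting map $(x,a)\mapsto \int_{O\setminus D}\tilde f_\epsilon(y)\,q(dy\mid x,a)$ is still discontinuous in $(x,a)$ wherever $q$ is; the uniform bound you derive from (\ref{eq-maj-minpair}) controls the size of that piece, not its continuity. So you have not produced any continuous approximant to $\phi$ that could be passed through the weak limit.

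What actually works (and what the paper does) is to apply Lusin in the \emph{outer} $(x,a)$-variable: treat $(x,a)\mapsto q(\cdot\mid x,a)$ as a $\P(\X)$-valued Borel map and apply Lusin with the finite measure $\nu$ (action by action on the finite set $F=\proj_\A(\Gamma_j)$) to get a closed $B\subset\X$ with $\nu(B^c)<\delta$ on which $q$ is continuous; then $\phi$ is continuous on the closed set $(D\cup B)\times F$ and admits a Tietze extension $\tilde\phi$. The missing ingredient that makes this useful is a bridging lemma you do not mention: because for $k\ge 1$ the law of $x_k$ is $\int q(\cdot\mid x,a)\,\gamma_{k-1}(d(x,a))$, condition (M) transfers to the marginals, giving
\[
\bar p_n\big((O\setminus D)\cap E\big)\le \nu(E)\quad\text{for all }E\in\B(\X),
\]
and the same for $\bar p$ (via the Portmanteau inequality on open sets). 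This is what converts ``$B$ is $\nu$-large'' into ``$(D\cup B)\times F$ is $\mu_{n_k}$- and $\bar\mu$-large,'' so that the error $\int|\phi-\tilde\phi|\,d\mu_{n_k}$ is controlled by $\nu(B^c)+\mu_{n_k}(\Gamma_j^c)$. The same device, with Lusin applied to $c\wedge m$ instead of $q$, handles the cost bound. Without this transfer of the majorization from $q$ to the occupation marginals, your Lusin step has no leverage over the outer integral.
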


\begin{theorem} \label{thm-su-mp}
Under Assumption~\ref{cond-pc-3}, there is a minimum pair $(\mu^*, p^*) \in \Delta_s$. 
\end{theorem}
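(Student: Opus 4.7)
The strategy is to invoke Proposition~\ref{prp-su-mp} along a minimizing sequence of policy/initial distribution pairs, form the associated occupation measures on $\X \times \A$, extract a weak limit via tightness from (SU), and verify that the limit disintegrates into a stationary pair attaining $\rho^*$. The majorization condition~(M) and Lusin's theorem are the tools that handle the otherwise discontinuous integrands appearing in the limiting invariance identity.

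By Assumption~\ref{cond-pc-3}(G), $\rho^* < \infty$, so I pick $(\pi_k, \zeta_k) \in \Pi \times \P(\X)$ with $J(\pi_k, \zeta_k) \downarrow \rho^*$ and, via Proposition~\ref{prp-su-mp}, obtain stationary pairs $(\mu_k, p_k) \in \Delta_s$ with $J(\mu_k, p_k) \leq J(\pi_k, \zeta_k)$. Define the occupation measures
\[ \eta_k(dx, da) := \mu_k(da \mid x) \, p_k(dx) \in \P(\X \times \A), \]
each supported on $\Gamma$; strict stationarity of the induced chain under $p_k$ gives $J(\mu_k, p_k) = \int c \, d\eta_k$, hence $\int c \, d\eta_k \to \rho^*$. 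Tightness of $\{\eta_k\}$ follows from (SU): since $\int c \, d\eta_k$ is eventually bounded and $\inf_{(x,a) \in \Gamma_j^c} c(x, a) \to \infty$, Markov's inequality yields $\sup_k \eta_k(\Gamma_j^c) \to 0$ as $j \to \infty$. Pass to a weakly convergent subsequence $\eta_k \to \eta^*$; openness of each $\Gamma_j^c$ then forces $\eta^*(\Gamma) = 1$. Standard disintegration on Borel spaces produces $\eta^* = \mu^*(da \mid x) \, p^*(dx)$ with $p^*$ the $\X$-marginal and $\mu^*$ a Borel stochastic kernel that respects the control constraint $p^*$-almost everywhere (extending arbitrarily on a null set via the policy from the Section~\ref{sec-2} footnote).

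The main step, and the main obstacle, is to verify that $p^*$ is invariant under $\mu^*$, i.e.\ to pass to the limit as $k \to \infty$ in the prelimit invariance identity
\[ \int_\X f(y) \, p_k(dy) = \int_{\X \times \A} \Big( \int_\X f(y) \, q(dy \mid x, a) \Big) \, \eta_k(dx, da), \qquad f \in C_b(\X). \]
The left side converges because the $\X$-marginal of $\eta_k$ converges weakly to $p^*$, but the inner integrand on the right is generally not continuous in $(x,a)$, so weak convergence of $\eta_k$ does not apply off the shelf. To overcome this, for each $\epsilon > 0$ and each $j$, take $K = \proj_\X(\Gamma_j)$ and invoke condition~(M) to produce an open $O \supset K$, a closed set $D$ on which $q$ is continuous and $c$ is lsc, and a finite majorant $\nu$ bounding $q(\,\cdot \cap (O \setminus D) \mid x, a)$ uniformly in $(x,a) \in \Gamma$. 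Split $\int f \, dq(\cdot \mid x, a)$ into its pieces over $D$, $O \setminus D$, and $O^c$: over $D$ the integrand is continuous and bounded in $(x,a)$; over $O \setminus D$, Lusin's theorem (Theorem~\ref{thm-lusin}) applied to $f$ relative to $\nu$ yields a bounded continuous approximation of $f$ that, via the majorization bound, is uniform in $(x,a)$ up to $O(\epsilon)$; and the $O^c$-contribution becomes negligible by letting $K$ sweep out $\proj_\X(\Gamma)$ as $j \to \infty$ together with the tightness of $\{\eta_k\}$ and of the one-step distributions. Combining these three pieces and passing to the limit delivers the desired invariance identity for $(\mu^*, p^*)$.

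A similar splitting of $c$, using the lower semicontinuity of $c$ on $D \times \A$ together with a truncation $c \wedge N$ justified by (SU), gives $\int c \, d\eta^* \leq \liminf_k \int c \, d\eta_k = \rho^*$. Since $J(\mu^*, p^*) = \int c \, d\eta^* \geq \rho^*$ by the definition of $\rho^*$, equality holds and $(\mu^*, p^*)$ is the desired stationary minimum pair.
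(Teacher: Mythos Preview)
Your overall architecture matches the paper's: start from Proposition~\ref{prp-su-mp}, form the invariant occupation measures $\eta_k$, extract a weak limit via (SU), and check invariance plus the cost bound. The gap is in the central step, where you pass to the limit in the invariance identity. You propose to split $\int_\X f(y)\,q(dy\mid x,a)$ according to whether the \emph{target} variable $y$ lies in $D$, $O\setminus D$, or $O^c$, and then claim that the $D$-piece is continuous in $(x,a)$. But condition~(M) says nothing of the sort: it asserts continuity of $q(dy\mid x,a)$ when the \emph{domain} variable $(x,a)$ lies in $D\times\A$, not continuity of $(x,a)\mapsto\int_D f(y)\,q(dy\mid x,a)$ for arbitrary $(x,a)\in\Gamma$. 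Likewise, applying Lusin's theorem to the already continuous test function $f$ is vacuous; it yields no approximation you can exploit. So as written, none of your three pieces gives a function of $(x,a)$ for which weak convergence of $\eta_k$ applies.

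The missing idea is that invariance of $p_k$ converts the target-space majorization in~(M) into a \emph{domain}-space bound on the marginals: since $p_k(E)=\int q(E\mid x,a)\,\eta_k(d(x,a))$, one gets $p_k\big((O\setminus D)\cap B\big)\le\nu(B)$ for all Borel $B$ (this is Lemma~\ref{lem-su-mp}, carried over to the present sequence). With that in hand, the paper applies Lusin's theorem to the $\P(\X)$-valued map $(x,a)\mapsto q(dy\mid x,a)$ on the finite action set $F=\proj_\A(\Gamma_j)$, producing a closed $B\subset\X$ with $\nu(B^c)$ small and $q$ continuous on $B\times F$. Then $\phi(x,a)=\int f\,dq(\cdot\mid x,a)$ is continuous on the \emph{domain} set $(D\cup B)\times F$, is extended to $\tilde\phi\in C_b(\X\times\A)$ via Tietze--Urysohn, and the error $\big|\int(\phi-\tilde\phi)\,d\eta_k\big|$ is controlled because $\eta_k\big((K\setminus(D\cup B))\times F\big)\le p_k\big((O\setminus D)\cap B^c\big)\le\nu(B^c)$, with the remaining mass on $\Gamma_j^c$ handled by tightness. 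The same mechanism (Lusin applied to $c^m$ in the domain variable, error controlled by Lemma~\ref{lem-su-mp}) drives the cost inequality; your ``similar splitting of $c$'' inherits the same confusion.
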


Proposition~\ref{prp-malpha} and Theorem~\ref{thm-su-mp2} below are analogous to the prior results in \cite{HL99,Las99,VAm99} for lower semicontinuous models. In particular, Prop.~\ref{prp-malpha} can be compared with \cite[(13) in Theorem 3.6(a)]{VAm99}. Theorem~\ref{thm-su-mp2} (except for its last part regarding the strong optimality) can be compared with \cite[Theorems 3.4 and 3.6(b)]{VAm99} and \cite[Theorem 11.4.6(a) and (c)]{HL99}, as well as with \cite[Theorem III.1]{Las99}. 

Proposition~\ref{prp-malpha} shows a relation between the minimum average cost $\rho^*$ and the minimum $\alpha$-discounted costs $m_\alpha$, which are defined as follows. 
For $\alpha \in (0,1)$, the $\alpha$-discounted expected cost of a policy $\pi$ for an initial distribution $\zeta$ is given by
$$ v_\alpha(\pi, \zeta) : =  \textstyle{ \E^\pi_\zeta \big[ \sum_{n=0}^\infty \alpha^n \, c(x_n, a_n) \big]}.$$
Let
\begin{equation}
  m_\alpha := \inf_{x \in \X} \inf_{\pi \in \Pi} v_\alpha(\pi, x) = \inf_{\zeta  \in \P(\X)} \inf_{\pi \in \Pi} v_\alpha(\pi, \zeta),
\end{equation}  
where the second equality is easy to verify. By a Tauberian theorem (see e.g., \cite[Lemma 8.10.6]{Puterman94}),
$$  \liminf_{\alpha \uparrow 1} \, (1 - \alpha)\, m_\alpha \leq  \limsup_{\alpha \uparrow 1} \, (1 - \alpha) \, m_\alpha \leq \rho^*.$$
Proposition~\ref{prp-malpha} asserts the equality of the above three quantities.
Its proof will also show, like in \cite{VAm99}, that a stationary minimum pair can also be constructed from nearly optimal policies of a sequence of discounted problems with vanishing discount factors. 

\begin{prop} \label{prp-malpha}
Under Assumption~\ref{cond-pc-3}, $\lim_{\alpha \uparrow 1} \, (1 - \alpha) \, m_\alpha = \rho^*$.
\end{prop}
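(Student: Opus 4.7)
The Tauberian inequality $\limsup_{\alpha\uparrow 1}(1-\alpha)m_\alpha \le \rho^*$ is already recorded just before the statement, so the task reduces to proving the matching liminf bound $\rho^* \le \liminf_{\alpha\uparrow 1}(1-\alpha)m_\alpha$. The plan is to imitate, in the vanishing-discount setting, the construction behind Proposition~\ref{prp-su-mp} and Theorem~\ref{thm-su-mp}: extract a stationary pair in $\Delta_s$ as a weak limit of normalized discounted state--action occupation measures of a sequence of nearly $\alpha$-optimal policies, and then compare its long-run average cost against $\liminf_{\alpha\uparrow 1}(1-\alpha)m_\alpha$.

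First I would choose $\alpha_n\uparrow 1$ with $(1-\alpha_n)m_{\alpha_n}\to \rho_\infty := \liminf_{\alpha\uparrow 1}(1-\alpha)m_\alpha$, and pick $(\pi_n,\zeta_n)\in\Pi\times\P(\X)$ with $(1-\alpha_n)v_{\alpha_n}(\pi_n,\zeta_n)\le (1-\alpha_n)m_{\alpha_n} + 1/n$. By (G), $\rho^*<\infty$, hence $\rho_\infty \le \rho^* < \infty$ and these costs are uniformly bounded. Define the normalized discounted occupation measures on $\Gamma$,
\[
   \mu_n(C) \,:=\, (1-\alpha_n)\sum_{k=0}^\infty \alpha_n^k \, \Pr^{\pi_n}_{\zeta_n}\bigl\{(x_k,a_k)\in C\bigr\}, \qquad C\in\B(\X\times\A),
\]
so that $\int c\,d\mu_n = (1-\alpha_n) v_{\alpha_n}(\pi_n,\zeta_n)$, and let $p_n$ be the $\X$-marginal of $\mu_n$. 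Each $\mu_n$ satisfies the discounted flow identity
\[
   p_n(B) \,=\, (1-\alpha_n)\,\zeta_n(B) \,+\, \alpha_n\!\int_{\X\times\A}\! q(B\mid x,a)\,\mu_n(dx,da), \qquad B\in\B(\X).
\]

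From (SU) and the uniform bound on $\int c\,d\mu_n$, we get $\mu_n(\Gamma_j^c)\to 0$ as $j\to\infty$ uniformly in $n$, so $\{\mu_n\}$ is tight on $\X\times\A$ (each $\Gamma_j$ being compact, and $\A$ being discrete and countable). Extract a weakly convergent subsequence $\mu_n\wto\mu^\infty$ and let $p^*$ be its $\X$-marginal. Because $\A$ is countable and discrete, the restrictions $\mu^\infty(\cdot\times\{a\})$ for $a\in\A$ are finite measures absolutely continuous w.r.t.\ $p^*$; their Radon--Nikodym densities $\{f_a\}$ sum to $1$ $p^*$-a.e.\ and define a Borel measurable stationary policy $\mu^*(da\mid x)$ with $\mu^\infty = p^*(dx)\mu^*(da\mid x)$. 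Sending $n\to\infty$ in the flow identity (the $(1-\alpha_n)\zeta_n(B)$ term vanishes) yields $p^*(B)=\int q(B\mid x,a)\,\mu^\infty(dx,da)$, so $p^*$ is invariant under $\mu^*$ and $(\mu^*,p^*)\in\Delta_s$. Lower semicontinuous integration of $c$ against $\mu_n\wto\mu^\infty$ then gives $J(\mu^*,p^*)=\int c\,d\mu^\infty \le \liminf_n\int c\,d\mu_n = \rho_\infty$, and since $\rho^* \le J(\mu^*,p^*)$ we obtain $\rho^* \le \rho_\infty$, as desired.

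The main obstacle is passing to the limit both in the flow identity and in $\int c\,d\mu_n$, since neither $q(B\mid\cdot,\cdot)$ nor $c$ is assumed continuous on $\Gamma$. This is exactly where the majorization condition (M) together with Lusin's theorem enters, as in the proofs of Proposition~\ref{prp-su-mp} and Theorem~\ref{thm-su-mp}: on a compact $K \subset \proj_\X(\Gamma_j)$, the mass $q((O\setminus D)\cap \cdot\mid x,a)$ is dominated by a finite measure $\nu$, and Lusin's theorem applied to $\nu$ (and to $p^*$, and to the $\X$-marginals $p_n$) lets us strip off a small set and approximate $q(B\mid\cdot,\cdot)$ and $c$ by continuous functions on the remainder, turning the limit passage into a routine weak-convergence step. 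I would therefore reuse the approximation lemmas developed for Proposition~\ref{prp-su-mp} verbatim; the only change relative to Theorem~\ref{thm-su-mp} is that the $\mu_n$ here are discounted occupation measures rather than Ces\`aro state--action averages, which only affects the $(1-\alpha_n)\zeta_n(B)$ residual in the flow identity.
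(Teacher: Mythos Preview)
Your proposal is correct and follows essentially the same route as the paper: choose $\alpha_n\uparrow 1$ and near-optimal pairs, form the normalized discounted occupation measures satisfying the flow identity~(\ref{eq-prf-ma2}), use (SU) for tightness, extract a weak limit, and then invoke the Lusin/majorization machinery of Lemmas~\ref{lem-prf-su1}--\ref{lem-prf-su2} (adapted via the analogue Lemma~\ref{lem-su-disc-mp}, which carries the extra $(1-\alpha_n)$ term) to pass to the limit in both the cost integral and the flow identity. One small correction to your exposition: Lusin's theorem is applied only with respect to the majorizing measure $\nu$, not to the individual marginals $p_n$ or $p^*$; the whole point of condition~(M) is that the single bound $\bar p_n((O\setminus D)\cap B)\le \nu(B)+(1-\alpha_n)$ transfers the $\nu$-smallness of the Lusin exceptional set uniformly to all $\bar p_n$, which is what makes the limit passage work.
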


Theorem~\ref{thm-su-mp2} below extends Theorem~\ref{thm-su-mp} by considering optimality properties that involve the limit inferior expected average costs and pathwise average costs of policies. Its part (a), in particular (\ref{eq-thm2-a2}), is the key result that leads to the existence of a pathwise optimal stationary policy given in the part (b).

The assumptions of Theorem~\ref{thm-su-mp2}(b) involve positive Harris recurrent and $f$-regular Markov chains (see \cite{MeT09} or our Appendix~\ref{appsec-1} for the definitions of these Markov chains). For this part of the theorem, we define an expected one-stage cost function $c_\mu$ for a stationary policy $\mu \in \Pi_s$ by
$$c_{\mu}(x) : = \textstyle{\int_\A c(x, a) \, \mu(da \mid x)}, \qquad x \in \X.$$

{\samepage
\begin{theorem} \label{thm-su-mp2}
Let Assumption~\ref{cond-pc-3} hold.
\begin{enumerate}[leftmargin=0.65cm,labelwidth=!]
\item[\rm (a)] For any policy $\pi$ and initial distribution $\zeta$, the limit inferior average cost satisfies
\begin{equation} \label{eq-thm2-a1}
 \iJ(\pi, \zeta) \geq \rho^*,
\end{equation} 
and the pathwise average cost satisfies
\begin{equation} \label{eq-thm2-a2}
  \liminf_{n \to \infty} \, n^{-1} \textstyle{ \sum_{k=0}^{n-1} c(x_k, a_k)} \geq \rho^*, \qquad \text{$\Pr^{\pi}_\zeta$-almost surely}.
\end{equation}  
\item[\rm (b)] Let $\mu^*$ 
be the policy in a stationary minimum pair.
If $\mu^*$ induces a positive Harris recurrent Markov chain on $\X$, then $\mu^*$ is 
pathwise average-cost optimal with 
\begin{equation} \label{eq-thm2-a3}
    \hat J(\mu^*, \zeta) = \underline{\hat{J}}(\mu^*, \zeta) = \rho^*, \ \ \  \text{$\Pr^{\mu^*}_\zeta$-almost surely},  \qquad \forall \, \zeta \in \P(\X).
\end{equation} 
If, in addition, $c_{\mu^*}$ is finite-valued and the induced Markov chain is $f$-regular for $f=c_{\mu^*} + 1$, then $\mu^*$ is also strongly average-cost optimal with 
\begin{equation} \label{eq-thm2-a4}
 J(\mu^*,x) = \iJ(\mu^*, x) = \rho^*, \qquad \forall \, x \in \X.
\end{equation} 
\end{enumerate}
\end{theorem}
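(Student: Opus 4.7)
My plan is to establish part~(a) first, by adapting the arguments that power the existence result Proposition~\ref{prp-su-mp}, and then derive part~(b) from (a) together with the ergodic theorems for positive Harris recurrent and $f$-regular chains sketched in Appendix~\ref{appsec-1}.

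For the expected-cost inequality $\iJ(\pi,\zeta)\ge\rho^*$, I would essentially rerun the proof of Proposition~\ref{prp-su-mp}, but along a subsequence realizing the liminf rather than the limsup. Extract $\{n_k\}$ with $J_{n_k}(\pi,\zeta)/n_k\to\iJ(\pi,\zeta)$ (if the liminf is infinite there is nothing to prove); form the expected occupation measures $\nu_{n_k}:=n_k^{-1}\sum_{j=0}^{n_k-1}\E^\pi_\zeta[\delta_{(x_j,a_j)}]\in\P(\X\times\A)$, for which $\int c\,d\nu_{n_k}=J_{n_k}(\pi,\zeta)/n_k$. Condition (SU) produces tightness and hence a weak subsequential limit $\nu^*$, and applying (M) together with Lusin's theorem exactly as in Proposition~\ref{prp-su-mp} shows $\nu^*$ disintegrates as $\bar p(dx)\bar\mu(da\mid x)$ with $(\bar\mu,\bar p)\in\Delta_s$ and $\int c\,d\nu^*\le\iJ(\pi,\zeta)$; then $\rho^*\le J(\bar\mu,\bar p)=\int c\,d\nu^*\le\iJ(\pi,\zeta)$.

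For the pathwise inequality (\ref{eq-thm2-a2}), I would carry out the same program sample-pathwise. Fix $\omega$ with finite $\liminf_{n}n^{-1}\sum_{k<n}c(x_k,a_k)(\omega)$, pick a realizing subsequence $\{n_k(\omega)\}$, and form the random occupation measures $\hat\nu_{n_k}(\omega):=n_k^{-1}\sum_{j<n_k}\delta_{(x_j,a_j)(\omega)}$, which are tight by (SU). The main step, and the chief technical obstacle, is to show that $\Pr^\pi_\zeta$-almost surely every weak limit point $\hat\nu(\omega)$ factors as $\hat p(\omega)(dx)\hat\mu_\omega(da\mid x)$ with $\hat p(\omega)$ invariant under the stationary kernel induced by $\hat\mu_\omega$. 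The balance relation comes from applying the martingale strong law to the bounded differences $M^f_k:=f(x_{k+1})-\int f(y)\,q(dy\mid x_k,a_k)$ for a countable dense family of bounded continuous $f$, giving $n^{-1}\sum_{k<n}M^f_k\to0$ a.s.; condition (M) together with Lusin's theorem is then used, as in Proposition~\ref{prp-su-mp}, to promote the continuous-$f$ balance to the full Borel invariance in spite of the discontinuities of $q$. Once this pathwise invariance is in hand, minimality of $\rho^*$ forces $\int c\,d\hat\nu(\omega)\ge\rho^*$, so
\[
\liminf_{n\to\infty}n^{-1}\sum_{k=0}^{n-1}c(x_k,a_k)=\liminf_{k\to\infty}\int c\,d\hat\nu_{n_k}(\omega)\ge\int c\,d\hat\nu(\omega)\ge\rho^*\quad\text{a.s.}
\]

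To prove (b), I would combine (a) with the Appendix ergodic theory. If $\mu^*$ from a stationary minimum pair $(\mu^*,p^*)$ induces a positive Harris recurrent chain, then by uniqueness of the invariant probability $p^*$ is that measure, and the strong law of large numbers for positive Harris chains yields, for every initial distribution $\zeta$, the $\Pr^{\mu^*}_\zeta$-a.s.\ limit $n^{-1}\sum_{k<n}c_{\mu^*}(x_k)\to\int c_{\mu^*}\,dp^*=\rho^*$. An accompanying martingale-difference SLLN applied to $c(x_k,a_k)-c_{\mu^*}(x_k)$ absorbs the action randomisation and delivers $\hat J(\mu^*,\zeta)=\underline{\hat{J}}(\mu^*,\zeta)=\rho^*$ a.s.; combined with the pathwise bound from (a), this is pathwise average-cost optimality. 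For strong optimality, the hypothesis that $c_{\mu^*}$ is finite-valued and the chain is $f$-regular with $f=c_{\mu^*}+1$ upgrades the a.s.\ convergence to $n^{-1}\E^{\mu^*}_x\sum_{k<n}c_{\mu^*}(x_k)\to\rho^*$ for \emph{every} $x\in\X$, so $J(\mu^*,x)=\iJ(\mu^*,x)=\rho^*$ pointwise; together with $\iJ(\pi,x)\ge\rho^*$ from (a), $\mu^*$ is strongly average-cost optimal.
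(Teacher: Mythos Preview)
Your plan for part~(a) is essentially the paper's own: the expected-cost inequality~(\ref{eq-thm2-a1}) is exactly the liminf version of Proposition~\ref{prp-su-mp} (the paper in fact already arranged that proof to yield $J(\bar\mu,\bar p)\le\iJ(\pi,\zeta)$), and your pathwise scheme---empirical occupation measures, tightness from (SU), the martingale SLLN for a countable determining class in $\C_b(\X)$, and Lusin/(M) to handle the discontinuities---matches Lemmas~\ref{lem-su-pathwise-mp}--\ref{lem-prf-pw2}. One bookkeeping point you leave implicit but the paper makes explicit: to keep the excluded null set countable, the Lusin sets must be \emph{pre-selected} once and for all as a countable family (indexed by the truncation level $m$, the compact index $j$, and the accuracy $i^{-1}$), and a separate martingale argument is needed to get the pathwise majorization $\limsup_n \bar p_n^\omega((O\setminus D)\cap E)\le\nu(E)$ a.s.\ for each such pre-chosen $E$. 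Without this pre-selection your ``as in Proposition~\ref{prp-su-mp}'' step would, for each $\omega$, invoke a different Lusin set and a different null set, and the union over $\omega$ need not be null.

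For part~(b), your route and the paper's diverge, and yours has a gap. You propose to apply the Harris LLN to the state chain $\{x_n\}$ to get $n^{-1}\sum_{k<n}c_{\mu^*}(x_k)\to\rho^*$ a.s., and then absorb the action randomisation by a martingale-difference SLLN on $c(x_k,a_k)-c_{\mu^*}(x_k)$. The problem is that under (SU) the cost $c$ is unbounded, so these martingale differences are unbounded, and the standard $n^{-1}S_n\to0$ law (e.g.\ \cite[Theorem~2.18]{HaH80}, used elsewhere in the paper for \emph{bounded} differences) does not apply without further moment control that you do not have: the only integrability information available is $\int c\,d\gamma^*=\rho^*<\infty$, which says nothing about second moments or about $c_{\mu^*}(x)$ being finite for all $x$. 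The paper avoids this entirely by a different device (Lemma~\ref{lem-harris-rec}): it shows that the state\textendash action chain $\{(x_n,a_n)\}$, restricted to $\Gamma_{\mu^*}=\{(x,a):\mu^*(\{a\}\mid x)>0\}$, is itself positive Harris recurrent (the countability of $\A$ is used here), and then applies the Harris LLN \emph{directly} to $c(x_n,a_n)$, which only needs $\int c\,d\gamma^*<\infty$. Your $f$-regularity argument for strong optimality matches the paper's.
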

}

\begin{myremark} \rm \label{rmk-pathwise-opt}
For a stationary minimum pair $(\mu^*, p^*)$, without the additional recurrence/regularity assumptions in Theorem~\ref{thm-su-mp2}(b), we can only assert that
\begin{equation} \label{eq-thm2-a3-alt}
 \hat J(\mu^*, x) = \underline{\hat{J}}(\mu^*, x) = \rho^*,  \ \ \ \text{$\Pr^{\mu^*}_x$-almost surely},  \quad \text{$\forall \, x \in \X \,$ s.t.} \ J(\mu^*, x) = \rho^*,
\end{equation} 
which, in view of the property (\ref{eq-opt-spair}), implies that 
$$\hat J(\mu^*, x) = \underline{\hat{J}}(\mu^*, x) = \rho^*, \qquad \text{for $p^*$-almost all } x \in \X.$$
The conclusion (\ref{eq-thm2-a3-alt}) follows from Theorem~\ref{thm-su-mp2}(a); the reasoning is the same as that used in the proofs of \cite[Lemma III.1]{Las99} and \cite[Theorem 11.4.6(b)]{HL99}. For an example where (\ref{eq-thm2-a3-alt}) holds but (\ref{eq-thm2-a3}) does not, or where (\ref{eq-opt-spair}) holds but (\ref{eq-thm2-a4}) does not, see Example~\ref{ex-1} below.
\myqed
\end{myremark}

We prove the above results in Section~\ref{sec-4}. In the rest of this section, we discuss some aspects of them and give illustrative examples.

\subsection{Discussions and Illustrative Examples} \label{sec-minpair-ex}

As we mentioned in the introduction, besides the minimum pair approach, another method to study average-cost MDPs is the vanishing discount factor approach. With this approach, one aims to first establish the ACOI (average cost optimality inequality) for an MDP, and then infer from the ACOI the existence of an optimal or nearly optimal stationary policy (see e.g., \cite{FKZ12,Sch93,Sen89}). It is known that the main assumptions used by the two approaches do not imply each other;
a discussion about this and a finite state and action example are given in \cite[p.~121 and Example 5.7.3, p.~114]{HL96}.
We give now another brief discussion and a countable state example to compare the two approaches in the context of this paper.

While we do not require the continuity of $c(x,a)$ and $q(dy \,|\, x,a)$ in $(x,a)$, because the action space $\A$ is discrete, trivially, for each state $x \in \X$ and w.r.t.\ the actions, $c(x,\cdot)$ is continuous and $q(dy \,|\, x,\cdot)$ strongly continuous (i.e., continuous w.r.t.\ set-wise convergence). Also, under the strictly unbounded cost assumption (SU), we have a compact (finite) level set $\{ a \in A(x) \,|\, c(x,a) \leq r \}$ for every state $x$ and $r > 0$. Thus the MDPs we consider actually satisfy a type of continuity and compactness condition that has been used to establish 
the ACOI via the vanishing discount factor approach (see e.g., \cite[Condition (S)]{Sch93} and \cite[Assumption 2.1]{HLe91}). 
Through the ACOI, one can also obtain the existence of a stationary average-cost optimal policy, which is comparable to our results in Theorem~\ref{thm-su-mp2}(b) from the minimum pair approach.
However, those ACOI results typically involve also a pointwise upper bound condition on the relative 
value functions $v_\alpha^*(x) - m_\alpha$ of the $\alpha$-discounted problems, 
where $v_\alpha^*$ is the value function given by $v_\alpha^*(x) : = \inf_{\pi \in \Pi} v_\alpha(\pi, x), x \in \X$ (recall $m_\alpha = \inf_{x \in \X} v_\alpha^*(x)$ by definition).
Specifically, the condition (B) \cite{Sch93} or the weaker (\underline{B}) \cite{FKZ12} is required: for every $x \in \X$, 
\begin{equation}
\text{(B):} \quad \sup_{\alpha \in (0,1)} \big( v^*_\alpha(x) - m_\alpha \big) < \infty, \qquad \text{(\underline{B}):}  \quad \liminf_{\alpha \uparrow 1} \, \big( v^*_\alpha(x) - m_\alpha \big) < \infty. \notag
\end{equation}

Example~\ref{ex-1} below shows that these conditions need not be satisfied by the MDP model under our assumptions, even when the positive Harris recurrence condition in Theorem~\ref{thm-su-mp2}(b) holds. 
In addition, we also use this example to illustrate the importance of the Harris recurrence and $f$-regularity condition in Theorem~\ref{thm-su-mp2}(b).

\begin{example} \label{ex-1} \rm
We let the MDP be an uncontrolled countable-space Markov chain discussed in~\cite[Chap.\ 11.1, p.\ 259]{MeT09}, choosing its parameters to make it a positive Harris recurrent Markov chain that is not regular (see \cite{MeT09} or our Appendix~\ref{appsec-1} for these concepts about Markov chains).
The states are $\{0, 1, 2, \ldots\}$. The probability of transitioning from state $i$ to $j$, denoted by $P(i,j)$, is given by
$$ P(0,0) = 1 \ \ \ \text{and} \ \ \  P(i,0) = \beta_i> 0, \quad P(i, i+1) = 1 - \beta_i, \quad \forall \, i \geq 1.$$ 
Suppose the probabilities $\{\beta_i\}$ are such that
\begin{equation} \label{eq-nonregular-mc}
 \textstyle{ \prod_{i=1}^\infty ( 1 - \beta_i) = 0, \qquad \sum_{k = 1}^\infty \prod_{i=1}^k (1 - \beta_i) = \infty.}
\end{equation}  
By the first relation above, this Markov chain is positive Harris recurrent, with $0$ being the only recurrent state.
By the second relation above, from an initial state $i \not=0$, the expected time to hit state $0$ is infinite: with $\tau_{\{0\}}= \inf \{ n \geq 1 \mid x_n = 0 \}$,
$\E_i \big[ \tau_{\{0\}} \big] = + \infty$. Thus the Markov chain is not regular.

If we let the one-stage costs be $c(0) = 0$, $c(i) = 1$ for $i \geq 1$, we have $m_\alpha = 0$ for all $\alpha \in (0,1)$, whereas from any state $i \geq 1$, 
$v^*_\alpha(i) \uparrow \E_i \big[ \tau_{\{0\}} \big] = +\infty$ as $\alpha \uparrow 1$.
So the conditions (B) and (\underline{B}) are violated. Now if we let $c(i) = i$ to make $c(\cdot)$ strictly unbounded, then $v^*_\alpha(i)$ is even larger than in the previous case and hence (B) and (\underline{B}) cannot be satisfied. Moreover, as the Markov chain is not regular, by~\cite[Theorem 11.3.11(i)]{MeT09} (with the petite set being $\{0\}$ and the constant $b = 1$ in the drift condition therein), the ACOI in this case, 
$$0 + h(i) \geq c(i) + \beta_i h(0) + (1 - \beta_i) h(i+1), \quad i \geq 0$$
(where $\beta_0:=1$) does not admit a nonnegative finite-valued solution $h(\cdot)$.

On the other hand, Assumption~\ref{cond-pc-3} holds trivially in this example. 
The only policy in this MDP is trivially strongly optimal, and the pathwise average costs satisfy $\hat J(i) = \rho^* = 0$ almost surely for all states~\cite[Theorem 17.1.7]{MeT09}.

In contrast to this pathwise optimality, however, one can choose $\beta_i$ and $c(i), i \geq 1$ so that the average cost $J(i) > \rho^*=0$ for $i \geq 1$ (in particular, if $\beta_i = 1/(i+1)$ and $c(i) = (i+1)$, then $J(i) = i$).
This shows that, in general, for the equality (\ref{eq-thm2-a4}) in Theorem~\ref{thm-su-mp2}(b) to hold, that is, $J(\mu^*,x) = \iJ(\mu^*, x) = \rho^*$ for all $x \in \X$, the positive Harris recurrence assumption alone is insufficient.  

Now if, instead of (\ref{eq-nonregular-mc}), we let $\prod_{i=1}^\infty ( 1 - \beta_i) > 0$ (e.g., let $\beta_i = 1 - (1 + \tfrac{2}{i+1})/(1+ \tfrac{2}{i})$ to have $\prod_{i=1}^\infty ( 1 - \beta_i) = 1/3$), then the Markov chain is ($\psi$-irreducible and) positive recurrent but not positive Harris recurrent. In this case, with strictly unbounded $c(\cdot)$, for every initial state $i \not= 0$, it occurs with probability $\prod_{j=i}^\infty ( 1 - \beta_j) > 0$ that the pathwise costs $\hat J(i) = \underline{\hat{J}}(i) = +\infty$. This shows that without the positive Harris recurrence assumption, the property (\ref{eq-thm2-a3}) in Theorem~\ref{thm-su-mp2}(b) need not hold in general and we have only (\ref{eq-thm2-a3-alt}) instead.
\myqed
\end{example}

\begin{myremark} \rm It is also possible to establish a partial form of ACOI without using the upper bound condition (B) or (\underline{B}), as Hern\'{a}ndez-Lerma and Lasserre \cite{HL94} showed. Specifically, under a set of assumptions on the MDP model and the value functions $v^*_\alpha$ of the discounted problems, they proved that ACOI holds for a nonempty subset of states and there is a stationary and nonrandomized policy that attains the minimum average cost $\rho^*$ for all initial states in that subset. This is similar to the property (\ref{eq-opt-spair}) we get from the existence of a stationary minimum pair given by Theorem~\ref{thm-su-mp}.

The countable action space MDP we consider here satisfies the model assumption in \cite{HL94} (which is the same as  \cite[Assumption 2.1]{HLe91} mentioned earlier). Other requirements in \cite{HL94} include: 
(i) $J(\pi, \bar x) < \infty$ for some policy $\pi$ and state $\bar x$; and (ii) there exist $N \geq 0$ and $\bar \alpha \in (0,1)$, such that $v^*_\alpha(x) - v^*_\alpha(\bar x) \geq -N$ for all $\alpha \in (\bar \alpha, 1)$. 
The proof in \cite{HL94} shows that such a state $\bar x$ must be a state with the minimum average cost $\rho^*$. Thus, although the requirement (i) looks almost the same as our Assumption~\ref{cond-pc-3}(G), the two are essentially different in nature. Identifying a state $\bar x$ that meets the conditions in \cite{HL94} would generally be much harder than verifying (G). This and the preceding discussions serve to show that even for the simpler, countable action space MDPs, the vanishing discount factor approach and the minimum pair approach are genuinely different and complementary to each other, and our results are not subsumed by the existing results from the former approach.
\myqed
\end{myremark}

We use the next Example~\ref{ex-2} to demonstrate a case where the majorization condition (M) is satisfied naturally. For simplicity, we consider a problem similar to a one-dimensional linear-quadratic (LQ) control problem but with a discretized action space and ``modulated'' quadratic costs. The same reasoning can be applied to higher dimensional problems with nonlinear dynamics and additive noise.

\begin{example} \label{ex-2} \rm
 Let $\X = \R$, $\A \subset \R$, and $c(x,a) = \beta(x) \, ( x^2 +  a^2)$, where $\beta(\cdot)$ is an arbitrary (measurable) nonnegative function such that $\lim\inf_{|x| \to \infty} \beta(x) > 0$, $\sup_{x \in \R} \beta(x) < \infty$.
Let 
$$x_{n+1} = x_{n} + a_n + \omega_n(x_n, a_n), \quad n \geq 0,$$
where $\omega_n(x_n, a_n)$ is a random disturbance whose distribution, given $\{(x_k, a_k)\}_{k \leq n}$, 
depends only on $(x_n, a_n)$ and is given by $F_{x,a} \in \P(\R)$ for $(x_n, a_n)=(x,a)$.
Consider a discrete action space $\A = \big\{ k \delta \mid k = 0, \, \pm 1, \, \pm 2, \, \ldots \big\}$ for some small $\delta > 0$.
For simplicity, suppose $\Gamma = \X \times \A$. The function $c(\cdot)$ is clearly strictly unbounded; e.g., we can let $\Gamma_j = [-j, j] \times \big\{ k \delta \mid -j \leq k \leq j\}$ in Assumption~\ref{cond-pc-3}(SU).

Now consider Assumption~\ref{cond-pc-3}(M).
Suppose that for all $(x,a) \in \Gamma$, w.r.t.\ the Lebesgue measure, the distributions $F_{x,a}$ have densities $f_{x,a}$ that are bounded uniformly from above by $\ell$.
For a closed interval $K = [-j, j]$, consider the open interval $O = (-j-1, j+1)\supset K$. 
A finite measure $\nu$ satisfying Assumption~\ref{cond-pc-3}(M) (with the closed set $D = \emptyset$) is simply given by $\ell$ times the Lebesgue measure on~$O$. 

Regarding Assumption~\ref{cond-pc-3}(G), suppose that $F_{x,a}$, $(x, a) \in \Gamma$, have zero means and variances bounded uniformly by $\sigma^2$. Then a policy $\pi$ that satisfies Assumption~\ref{cond-pc-3}(G) for the initial state $x=0$ is the one that chooses the action $a_n = \argmin_{a \in \A, |a| \leq |x_n|} | x_n + a|$ (since $J(\pi, 0) \leq 2  (\delta^2 + \sigma^2) \cdot \sup_{x \in \R} \beta(x) < \infty$, as can be verified). In this simple example, we can see immediately a solution to the condition (G). For more complicated problems, Markov chain theory can be useful in finding a stationary policy $\pi$ with a finite average cost; see \cite[Sec.~IV.A]{Mey97}.

Thus, except for Theorem~\ref{thm-su-mp2}(b), which requires additional recurrence/regularity conditions, all the theorems we gave hold in this example. 
There is no need here for the continuity of $q(dy \mid x, a)$ or $c(x,a)$ in $x$. 
\myqed
\end{example}

In the rest of this section, we discuss some limitations in the conditions (M) and (SU), when dealing with discontinuous MDP models. 

\begin{myremark}[about (M) and the set $D$] \rm
As demonstrated in Example~\ref{ex-2}, the majorization condition (M) seems practical when $\{q(dy\,|\,x,a) \mid (x,a) \in \Gamma\}$ have densities w.r.t.\ a common $\sigma$-finite measure. When those probability measures have (nontrivial) purely atomic components and the state space $\X$ is uncountable, the majorization condition (M) can fail, even if the state transition stochastic kernel $q(dy\,|\,x,a)$ is continuous at all but one point.
For example, suppose $\X = [0,2]$ and $\A =\{0\}$, and $q(dy\,|\,x,0) = \delta_{x}(dy)$ for $x < 1$, $q(dy\,|\,x,0) = \delta_{x/2}(dy)$ for $x \geq 1$.
Then discontinuity occurs at $x=1$ only. Let the closed set $D$ in the condition (M) be $\X \setminus (1-\epsilon, 1)$ for an arbitrarily small $\epsilon > 0$. But no finite measure can satisfy the inequality (\ref{eq-maj-minpair}) for $O = \X$ and the chosen $D$.
By excluding the set $D$ from $O$ in the inequality (\ref{eq-maj-minpair}), our objective is to broaden the range of applicability of the majorization argument. But in situations like the above, however we choose $D$, it does not help.\myqed
\end{myremark}

\begin{myremark}[about (SU)] \rm
We can use the majorization condition to handle certain types of discontinuities in $q(dy|\cdot, a)$ and $c(\cdot,a)$. 
However, for physical systems, it is also natural to have discontinuities in the control constraint $A(\cdot)$. 
For example, let $\X = [0,2]$, $\A=\{0,1\}$, and $A(x) = \{0,1\}$ for $x < 1$ and $A(x) = \{0\}$ for $x \geq 1$. 
Then the set-valued map $A(\cdot)$ is discontinuous at $x = 1$. To satisfy (SU), one must have the cost $c(x,1) \uparrow +\infty$ as $x \uparrow 1$, which is an unnatural requirement. This kind of discontinuity due to $A(\cdot)$ is hard to handle by the techniques in this paper, since we also rely on the condition (SU) in obtaining a tight family of probably measures on $\Gamma$ to start our analyses (see Section \ref{sec-4}). 
(The set-valued map $A(\cdot)$ here also violates the upper-semicontinuity condition discussed in \cite[Remark 11.4.2(a3)]{HL99}.)
\myqed
\end{myremark}

\section{Proofs} \label{sec-4}
We now prove the results given in Section~\ref{sec-3}.
The broad proof steps will be similar to those in the prior work for lower semicontinuous models \cite{HLe93,HL99,Kur89,VAm99}. But 
since the MDP model here is not lower semicontinuous, the arguments to carry out some of the steps are different.
We shall focus on those steps in our proofs.

The tool we will need to work with the majorization condition (M) is:

\begin{theorem}[Lusin's theorem {\cite[Theorem 7.5.2]{Dud02}}]  \label{thm-lusin}
Let $X$ be any topological space and $\nu$ a finite, closed regular
\footnote{A finite Borel measure $\nu$ on $X$ is \emph{closed regular} if for every Borel subset $B$ of $X$, $\nu(B) =  \sup \big\{ \nu(F) \mid F \ \text{closed}, F \subset B \big\}$ \cite[p.~224]{Dud02}. On a metric space, any finite Borel measure is closed regular \cite[Theorem 7.1.3]{Dud02}.}
Borel measure on $X$. Let $S$ be a separable metric space and let $f$ be a Borel measurable function from $X$ into $S$. Then for any $\epsilon > 0$ there is a closed set $F \subset X$ such that $\nu(X \setminus F) < \epsilon$ and the restriction of $f$ to $F$ is continuous.
\end{theorem}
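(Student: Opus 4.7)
The plan is to approximate $f$ uniformly, on a closed set of nearly full $\nu$-measure, by a sequence of continuous ``simple'' functions built from closed-set approximations of preimages of small balls. The separability of $S$ supplies the countable combinatorics, and closed regularity of $\nu$ supplies the required set approximations. Metrizability of $X$ is not assumed anywhere; only closed regularity of $\nu$ is used.

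Concretely, I would proceed as follows. For each integer $n \geq 1$, use separability of $S$ to cover it by countably many open balls $\{B_{n,k}\}_{k \geq 1}$ of radius $1/n$. Disjointify their preimages by $A_{n,k} := f^{-1}(B_{n,k}) \setminus \bigcup_{j < k} f^{-1}(B_{n,j})$; these are disjoint Borel sets partitioning $X$. By countable additivity of the finite measure $\nu$, pick $K_n$ with $\nu\bigl(X \setminus \bigcup_{k \leq K_n} A_{n,k}\bigr) < \epsilon/2^{n+1}$. By closed regularity, choose closed $F_{n,k} \subset A_{n,k}$ for each $k \leq K_n$ with $\sum_{k \leq K_n} \nu(A_{n,k} \setminus F_{n,k}) < \epsilon/2^{n+1}$. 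Set $G_n := \bigcup_{k \leq K_n} F_{n,k}$; this is closed as a finite union of closed sets, and $\nu(X \setminus G_n) < \epsilon/2^n$.

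Define a simple approximant $f_n : G_n \to S$ by picking a representative $s_{n,k} \in B_{n,k}$ and setting $f_n \equiv s_{n,k}$ on $F_{n,k}$. Because the sets $\{F_{n,k}\}_{k \leq K_n}$ are pairwise disjoint, finitely many, and closed in $X$, each $F_{n,k}$ is clopen in the subspace $G_n$, so $f_n$ is continuous on $G_n$. For every $x \in G_n$, the unique $k \leq K_n$ with $x \in F_{n,k}$ satisfies $f(x), f_n(x) \in B_{n,k}$, so $d(f(x), f_n(x)) \leq 2/n$. Finally, let $F := \bigcap_n G_n$, which is closed with $\nu(X \setminus F) \leq \sum_n \epsilon/2^n = \epsilon$. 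The restrictions $f_n|_F$ are continuous, and the uniform estimate shows $f_n \to f$ uniformly on $F$; hence $f|_F$ is continuous.

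The main obstacle, which is really bookkeeping rather than any deep difficulty, is arranging the partition so that each $f_n$ is genuinely \emph{continuous} (not just measurable). That requires the closed pieces $F_{n,k}$ for fixed $n$ to be both mutually disjoint \emph{and} finitely many, so that each is clopen in $G_n$; the disjointness comes from the disjointification step, and the finiteness from countable additivity of the finite measure $\nu$. Once these are in place, uniform convergence delivers continuity of $f|_F$ with no further effort.
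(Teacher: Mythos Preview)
Your argument is correct and is essentially the standard proof of Lusin's theorem (as found, e.g., in Dudley). One very minor quibble: from $\nu(X\setminus G_n)<\epsilon/2^n$ for each $n$ you only get $\nu(X\setminus F)\le\sum_n\epsilon/2^n=\epsilon$, not strict inequality; replacing $\epsilon$ by $\epsilon/2$ at the outset fixes this immediately.

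There is nothing to compare here, however: the paper does \emph{not} prove Theorem~\ref{thm-lusin}. It is quoted verbatim from \cite[Theorem~7.5.2]{Dud02} as a tool and invoked repeatedly in Section~\ref{sec-4} (in the proofs of Lemmas~\ref{lem-prf-su1}--\ref{lem-prf-su2} and in the construction of the sets $B^1_{i,j,m}$, $B^2_{i,j}$), but no proof is supplied or needed within the paper itself.
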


We will apply Lusin's theorem to the one-stage cost function $c(\cdot,\cdot)$ on $\X \times \A$ and also to the state transition stochastic kernel, i.e., the $\P(\X)$-valued function $q(dy \,|\, \cdot, \cdot)$ on $\X \times \A$, with $\nu$ being a finite Borel measure from Assumption~\ref{cond-pc-3}(M). 

Before proceeding, let us recall a few facts regarding probability measures on metric spaces that will be needed. 
Recall that on any metric space $X$, a family $\mathcal{E} \subset \P(X)$ is called \emph{tight} iff for any $\epsilon > 0$, there exists a compact set $K \subset X$ such that $p(K) > 1 - \epsilon$ for all probability measures $p \in \mathcal{E}$. 
Prohorov's theorem asserts that for a tight family $\mathcal{E}$, any sequence in $\mathcal{E}$ has a further subsequence converging weakly to a Borel probability measure on $X$ \cite[Theorem 6.1]{Bil68}.

Let $\C_b(X)$ denote the set of (real-valued) bounded continuous functions on $X$. 
If a sequence $\{p_n \}$ in $\P(X)$ converges weakly to $p \in \P(X)$, we shall write $p_n \wto p$.
Recall that by definition, 
$p_n \wto p \in \P(X)$ iff $\int f \, d p_n  \to  \int f dp$ for all $f \in \C_b(X)$,
and by \cite[Prop.\ 11.3.2]{Dud02}, two probability measures $p, p' \in \P(X)$ are equal iff $\int f \, dp = \int f \, dp'$ for all $f \in \C_b(X)$. 
If the metric space $X$ is separable, then by \cite[Chap.~II, Theorem~6.6]{Par67}, there exists a countable set $\{f_1, f_2, \ldots\} \subset \C_b(X)$ such that $p_n \wto p \in \P(X)$ iff
\begin{equation} 
   \textstyle{ 
\int f_\ell \, d p_n \, \to \, \int f_\ell \, dp}, \qquad \forall \, \ell \geq 1, \notag
\end{equation}
and this also means, by \cite[Prop.\ 11.3.2]{Dud02}, that for any $p,p' \in \P(X)$,
\begin{equation} \label{eq-detclass}
  p = p' \qquad \Longleftrightarrow \qquad  \textstyle{\int f_\ell \, d p \, = \, \int f_\ell \, dp'}, \qquad \forall \, \ell \geq 1.
\end{equation}   
The relation (\ref{eq-detclass}) will be important in analyzing the pathwise average costs of policies.

\subsection{Proofs of Prop.~\ref{prp-su-mp} and Theorem~\ref{thm-su-mp}}
Our line of reasoning is the same as that for \cite[Theorem 5.7.9(a) and Lemma 5.7.10]{HL96}. 
Consider the process $\{(x_n, a_n)\}$ induced by a policy $\pi$ and initial distribution $\zeta$ with the average cost $J(\pi,\zeta) < \infty$ as assumed in Prop.~\ref{prp-su-mp}. 
Let $\gamma_n \in \P(\X \times \A)$ be the marginal distribution of $(x_n, a_n)$, and 
define $\bar \gamma_n \in  \P(\X \times \A)$, $n \geq 1$, to be the averages
$$ \textstyle{\bar \gamma_n : = \frac{1}{n} \sum_{k=1}^n \gamma_k.}$$
Since $\pi$ obeys the control constraint of the MDP, all these probability measures $\gamma_n, \bar \gamma_n$ are concentrated on the set $\Gamma$, so their restrictions to $\Gamma$ are in $\P(\Gamma)$. We shall use the notation $\gamma_n^\Gamma, \bar \gamma_n^\Gamma$ for their restrictions to $\Gamma$.

The first step is to extract a weakly convergent subsequence from $\{\bar \gamma_n\}$ by using Assumption~\ref{cond-pc-3}(SU).
For later use, let us choose a subsequence $\{\bar \gamma_{n_k}\}$ such that 
$$  \textstyle{ \lim_{k \to \infty} \int c \, d \bar \gamma_{n_k}  =  \liminf_{n \to \infty}  \int c \, d \bar \gamma_n.}
 $$
The assumption $J(\pi, \zeta) < \infty$ implies not only that $\limsup_{n \to \infty} \int c \, d \bar \gamma_n = J(\pi, \zeta)  < \infty$ but also that $\sup_{n} \int c \, d \bar \gamma_n  < \infty$.
Since $c$ is strictly unbounded by Assumption~\ref{cond-pc-3}(SU), this implies that the family $\{\bar \gamma_n^\Gamma\}$ is tight in $\P(\Gamma)$. Then by Prohorov's Theorem~\cite[Theorem 6.1]{Bil68}, the subsequence $\{\bar \gamma_{n_k}^\Gamma\}$ has a further subsequence that converges weakly to some $\bar{\gamma}^\Gamma \in \P(\Gamma)$. We shall denote that further subsequence also by $\{\bar{\gamma}_{n_k}^\Gamma\}$ to simplify notation. Obviously, $\bar \gamma^\Gamma$ can be extended to a Borel probability measure $\bar \gamma$ on $\X \times \A$ with $\bar \gamma(\Gamma) = 1$.

By \cite[Cor.\ 7.26.1 and Cor.\ 7.27.2]{bs}, we can decompose $\bar \gamma$ into its marginal $\bar p$ on $\X$ and a Borel measurable stochastic kernel $\bar \mu(da  \,|\, x)$ on $\A$ given $\X$, and if necessary, by modifying $\bar \mu(da \,|\, x)$ at a set of $x$ with $\bar p$-measure zero, we can make it obey the control constraint: 
$$ \bar \gamma (d(x,a)) =  \bar \mu( da \mid x) \, \bar p(dx),  \qquad \text{and}  \qquad \bar \mu(A(x) \mid x) = 1,  \quad \forall \, x \in \X.$$
This gives us a stationary policy $\bar \mu$ and a probability measure $\bar p \in \P(\X)$.

In order to prove Prop.~\ref{prp-su-mp}, we need to show that (i) $(\bar \mu, \bar p)$ is a stationary pair and (ii) $J(\bar \mu, \bar p) \leq J(\pi, \zeta)$. More specifically:
\begin{itemize}[leftmargin=0.8cm]
\item[(i)] To show that $(\bar \mu, \bar p)$ is a stationary pair, we need to show that $\bar p$ is an invariant probability measure of the Markov chain $\{x_n\}$ induced by $\bar \mu$:
$$ \bar p(B) = \int_{\X} \int_{\A} q(B \mid x, a) \, \bar \mu(da \mid x) \, \bar p(dx), \qquad \forall \, B \in \B(\X). $$
By \cite[Prop.\ 11.3.2]{Dud02}, it amounts to showing that for every $v \in \C_b(\X)$,
\begin{equation}\label{eq-prf-su2}
   \int_{\X \times \A} \int_{\X} v(y) \, q(dy \mid x, a) \, \bar \gamma (d(x,a)) = \int_{\X} v(x) \, \bar p(dx).
\end{equation}
\item[(ii)] If (i) is proved, then $J(\bar \mu, \bar p) = \int c \, d \bar \gamma$ by the invariance property of $\bar p$, so to prove the desired relation $J(\pi, \zeta) \geq J(\bar \mu, \bar p)$, we also need to show that 
$$  \limsup_{n \to \infty} \textstyle{ \int c \, d \bar \gamma_n \geq \int c \, d \bar \gamma.}$$
For later use, we will instead prove the stronger inequality
\begin{equation} \label{eq-prf-su1}
 \liminf_{n \to \infty}  \int c \, d \bar \gamma_n = \lim_{k \to \infty} \int c \, d \bar \gamma_{n_k}  \geq \int c \, d \bar \gamma.
\end{equation}
\end{itemize}
To prove (\ref{eq-prf-su2}) and (\ref{eq-prf-su1}) (which, in the previous works, were proved by using the lower semicontinuous model assumption), we shall make use of Lusin's theorem and the following implication of Assumption~\ref{cond-pc-3}(M). 

Let $\bar p_n$ denote the marginal of $\bar \gamma_n$ on $\X$. Recall that $\bar p$ is the marginal of $\bar \gamma$ on $\X$.

\begin{lemma} \label{lem-su-mp}
Let the open set $O$, the closed set $D$, and the finite measure $\nu$ on $\B(\X)$ be as in Assumption~\ref{cond-pc-3}(M) for some $K \in \{\proj_\X(\Gamma_j)\}$. Then for all $B \in \B(\X)$,
$$ \bar p \big(  (O \setminus D) \cap B  \big) \leq \nu(B), \qquad  \bar p_n \big( (O \setminus D) \cap B \big) \leq \nu(B), \quad  \forall \, n \geq 1.$$
\end{lemma}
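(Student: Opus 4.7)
The plan is to prove the two inequalities separately: first the bound on $\bar p_n$, by applying the majorization (M) pointwise inside the kernel expression for the one-step marginals, and then the bound on $\bar p$, by combining the $\bar p_n$ bound with the weak convergence $\bar p_{n_k} \wto \bar p$ together with a standard outer-regularity argument to pass from open test sets to arbitrary Borel sets.

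For the $\bar p_n$ inequality, let $p_k$ denote the marginal of $\gamma_k$ on $\X$. Since $\pi$ obeys the control constraint, each $\gamma_{k-1}$ is concentrated on $\Gamma$, and the one-step evolution gives
$$ p_k(B) \,=\, \int_\Gamma q(B \mid x, a)\, \gamma_{k-1}(d(x,a)), \qquad B \in \B(\X),\ k \geq 1. $$
Substituting $B \mapsto (O \setminus D) \cap B$ and invoking (M) pointwise for each $(x,a) \in \Gamma$ gives $p_k\big((O \setminus D) \cap B\big) \leq \nu(B)$. Averaging over $k = 1, \ldots, n$ yields the desired bound on $\bar p_n$.

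For the $\bar p$ inequality, I would first upgrade $\bar\gamma_{n_k}^\Gamma \wto \bar\gamma^\Gamma$ in $\P(\Gamma)$ to $\bar\gamma_{n_k} \wto \bar\gamma$ in $\P(\X \times \A)$: any $f \in \C_b(\X \times \A)$ restricts to a function in $\C_b(\Gamma)$, and since both $\bar\gamma_{n_k}$ and $\bar\gamma$ are supported on $\Gamma$, the integrals of $f$ against them coincide with those of $f|_\Gamma$ against the $\Gamma$-restrictions, which converge by assumption. Composing with the continuous coordinate projection onto $\X$ yields $\bar p_{n_k} \wto \bar p$. Now $O \setminus D = O \cap D^c$ is open, so for any open $U \subset \X$ the set $(O \setminus D) \cap U$ is open, and by the Portmanteau theorem together with the already established $\bar p_n$ bound,
$$ \bar p\big((O \setminus D) \cap U\big) \,\leq\, \liminf_{k \to \infty} \bar p_{n_k}\big((O \setminus D) \cap U\big) \,\leq\, \nu(U). $$

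To extend from open $U$ to general Borel $B \in \B(\X)$, I would use that $\nu$ and the finite Borel measure $B' \mapsto \bar p\big((O \setminus D) \cap B'\big)$ are both closed regular on the metric space $\X$ (see the footnote to Theorem~\ref{thm-lusin}), hence outer regular by open sets. For any $\epsilon > 0$, pick an open $U \supset B$ with $\nu(U) \leq \nu(B) + \epsilon$; then monotonicity gives $\bar p\big((O \setminus D) \cap B\big) \leq \bar p\big((O \setminus D) \cap U\big) \leq \nu(U) \leq \nu(B) + \epsilon$, and letting $\epsilon \downarrow 0$ completes the proof. The plan has no serious obstacle; the only mildly delicate step is the weak-convergence upgrade from $\P(\Gamma)$ to $\P(\X \times \A)$, which is immediate once one observes that every measure in play vanishes outside $\Gamma$.
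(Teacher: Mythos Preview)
Your proposal is correct and follows essentially the same approach as the paper's proof: establish the bound on each one-step marginal via the majorization condition (M) and average to get the $\bar p_n$ inequality, then use Portmanteau on open sets followed by outer regularity (equivalently, closed regularity of finite Borel measures on metric spaces) to pass to arbitrary Borel sets for the $\bar p$ inequality. The only cosmetic difference is that the paper applies Portmanteau directly to $\bar\gamma_{n_k}\wto\bar\gamma$ on $\X\times\A$ with the open cylinder $\{(O\setminus D)\cap B\}\times\A$, whereas you first project down to $\bar p_{n_k}\wto\bar p$ on $\X$; both routes are equivalent.
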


\begin{proof}
For $n \geq 1$, consider the marginal distribution $\gamma_n$ of $(x_n, a_n)$. For any $E \in \B(\X)$, $\gamma_n(E \times \A) = \int q(E \mid x, a) \, \gamma_{n-1}(d(x,a))$, so by Assumption~\ref{cond-pc-3}(M), for any $B \in \B(\X)$, $\gamma_n( \{(O \setminus D) \cap B \}  \times \A) \leq \nu(B)$. Since $\bar p_n(\cdot) =  \bar \gamma_n(\cdot \times \A) = \tfrac{1}{n} \sum_{k=1}^n \gamma_k( \cdot \times \A)$, the desired inequality for $\bar p_n$ follows.

We now prove the first inequality for $\bar p$. As the set $O \setminus D$ is open, for any open set $B \subset \X$, the set $(O \setminus D)  \cap B$ is also open. Since $\bar \gamma_{n_k} \wto \bar \gamma$, by \cite[Theorem 11.1.1]{Dud02}, for any open set $E$, $\bar \gamma (E) \leq \liminf_{k \to \infty} \bar \gamma_{n_k}(E)$. Then, letting $E = \{ (O \setminus D)  \cap B \} \times \A$ for any open set $B$, we have 
$$\textstyle{\bar p((O \setminus D)  \cap B) \leq  \liminf_{k \to \infty} \bar p_{n_k}((O \setminus D)  \cap B) \leq \nu(B),} \quad \forall B \ \text{open}.$$ 
This inequality must also hold for any $B \in \B(\X)$. To see this, first, define $\bar p'(\cdot) = \bar p( (O \setminus D)  \cap \cdot)$, to simplify notation. By~\cite[Theorem~7.1.3]{Dud02}, on a metric space, finite Borel measures are closed regular, which means, in our case, that for any Borel set $B$, $\bar p'(B) = \sup\{ \bar p'(F) \mid F \subset B, F \ \text{closed} \}$ and the same is true for $\nu(B)$. This in turn implies that $\bar p'(B) = \inf\{ \bar p'(F) \mid F \supset B, F \ \text{open} \}$ and the same for $\nu(B)$. Now given $B \in \B(\X)$, for any open set $F \supset B$, we have $\bar p'(F) \leq \nu(F)$ as proved earlier, and therefore $\bar p'(B) \leq \nu(B)$.
\end{proof}

We now proceed to prove (\ref{eq-prf-su1}) and then (\ref{eq-prf-su2}).

\begin{lemma} \label{lem-prf-su1}
The inequality (\ref{eq-prf-su1}) holds.
\end{lemma}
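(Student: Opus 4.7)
The plan is to truncate $c$ at level $N$, to approximate $c_N := c \wedge N$ on $\X \times \A$ by a bounded continuous function for which the weak convergence $\bar\gamma_{n_k}^\Gamma \wto \bar\gamma^\Gamma$ can be used directly, and then to let $N \to \infty$. By monotone convergence in $N$, it suffices to prove $\int c_N \, d\bar\gamma \leq \liminf_k \int c_N \, d\bar\gamma_{n_k}$ for each fixed $N$. The core difficulty is that $c$ need only be lower semicontinuous on $D \times \A$ under Assumption~\ref{cond-pc-3}(M), so the standard ``Portmanteau for LSC integrands'' does not apply; Lusin's theorem combined with the majorization bound of Lemma~\ref{lem-su-mp} will be used to replace $c_N$ by an LSC function on a ``nice'' closed set $E$ whose complement has small measure \emph{simultaneously} under $\bar\gamma$ and every $\bar\gamma_n$.

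Fix $\epsilon > 0$. By (SU), choose $j$ so that $\inf_{\Gamma_j^c} c > 1/\epsilon$; writing $C := \sup_n \int c \, d\bar\gamma_n < \infty$, one has $\bar\gamma_n(\Gamma_j^c) \leq \epsilon \int c \, d\bar\gamma_n \leq \epsilon C$ for every $n$, and Portmanteau applied to the closed set $\Gamma_j$ yields $\bar\gamma(\Gamma_j^c) \leq \epsilon C$ as well. Set $K_j := \proj_\X(\Gamma_j)$ and $\A_j := \proj_\A(\Gamma_j)$; the latter is finite since it is a compact subset of the discrete space $\A$. Let $(O_j, D_j, \nu_j)$ be the open set, closed set, and finite measure provided by Assumption~\ref{cond-pc-3}(M) for $K_j$. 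For each $a \in \A_j$, Lusin's theorem applied to $c_N(\cdot, a):\X \to [0, N]$ with measure $\nu_j$ produces a closed $F_{j,a}^{(m)} \subset \X$ with $\nu_j(\X \setminus F_{j,a}^{(m)}) < 1/(m|\A_j|)$ on which $c_N(\cdot, a)$ is continuous. Intersecting, $F_j^{(m)} := \bigcap_{a \in \A_j} F_{j,a}^{(m)}$ is closed with $\nu_j(\X \setminus F_j^{(m)}) < 1/m$.

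Set the nice set $E_{j,m} := (D_j \cup F_j^{(m)}) \times \A_j$, which is closed in $\X \times \A$. I claim $c_N|_{E_{j,m}}$ is jointly LSC: for each $a \in \A_j$, $c_N(\cdot, a)$ is LSC on $D_j$ (by~(M)) and continuous on $F_j^{(m)}$ (by Lusin), and these pieces glue to an LSC function on the closed set $D_j \cup F_j^{(m)}$, because $F_j^{(m)}$ is closed, so any sequence in $F_j^{(m)}$ converging to a point of $D_j \cup F_j^{(m)}$ automatically has its limit in $F_j^{(m)}$; this reduces every sequential LSC check to either the LSC-on-$D_j$ case or the continuity-on-$F_j^{(m)}$ case. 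The discrete topology on $\A$ then promotes LSC in $x$ for each fixed $a \in \A_j$ to joint LSC on $E_{j,m}$. As $c_N|_{E_{j,m}}$ is bounded, nonnegative, and LSC on a metric space, it is the pointwise increasing limit of continuous $g_\ell:E_{j,m} \to [0, N]$ (e.g., the Moreau--Yosida regularizations); extend each $g_\ell$ by Tietze--Urysohn to a continuous $\tilde g_\ell:\X \times \A \to [0, N]$. The pointwise bound $\tilde g_\ell - c_N \leq N \cdot \mathbb{1}_{E_{j,m}^c}$ on $\X \times \A$, together with the weak convergence $\int \tilde g_\ell \, d\bar\gamma_{n_k} \to \int \tilde g_\ell \, d\bar\gamma$ and monotone convergence in $\ell$ for $g_\ell \uparrow c_N$ on $E_{j,m}$, yields
$$\int c_N \, d\bar\gamma \;\leq\; \liminf_k \int c_N \, d\bar\gamma_{n_k} \;+\; N\,\bar\gamma(E_{j,m}^c) \;+\; N\,\limsup_k \bar\gamma_{n_k}(E_{j,m}^c).$$

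It remains to bound $E_{j,m}^c$ uniformly in $n$. Any $(x,a) \in \Gamma$ with $a \notin \A_j$ lies in $\Gamma_j^c$ and hence has measure $\leq \epsilon C$ under $\bar\gamma$ and every $\bar\gamma_n$. Any $x \notin D_j \cup F_j^{(m)}$ lies either in $(O_j \setminus D_j) \cap (\X \setminus F_j^{(m)})$, whose $\bar p$- and $\bar p_n$-measures are bounded by $\nu_j(\X \setminus F_j^{(m)}) < 1/m$ via Lemma~\ref{lem-su-mp} with $B = \X \setminus F_j^{(m)}$, or outside $O_j \supset K_j$, which again projects into $\Gamma_j^c$ and has measure $\leq \epsilon C$. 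Therefore $\bar\gamma(E_{j,m}^c), \bar\gamma_n(E_{j,m}^c) \leq 1/m + 2\epsilon C$ uniformly in $n$. Letting $m \to \infty$, then $\epsilon \to 0$ (adjusting $j$ accordingly), and finally $N \to \infty$ finishes the proof. The main obstacle, I expect, is exactly this uniform control of $E_{j,m}^c$: Assumption~\ref{cond-pc-3}(M) supplies a \emph{single} finite measure $\nu_j$ dominating both $\bar p$ and every $\bar p_n$ on $O_j \setminus D_j$, which is precisely what makes a Lusin set $F_j^{(m)}$ chosen using $\nu_j$ effective against $\bar\gamma$ and the weakly convergent subsequence $\bar\gamma_{n_k}$ simultaneously; without this uniform majorization, a Lusin set calibrated to $\bar\gamma$ alone would carry no information about $\bar\gamma_{n_k}$.
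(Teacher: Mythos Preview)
Your proof is correct and follows essentially the same route as the paper: truncate $c$ to a bounded $c_N$, use (SU) to confine mass to $\Gamma_j$, apply Lusin's theorem with the majorizing measure $\nu_j$ to obtain a closed set on which $c_N$ is continuous action-by-action, glue this with the LSC-on-$D_j$ part to get an LSC restriction on a closed ``nice'' set, and invoke Lemma~\ref{lem-su-mp} to bound the complement uniformly in $n$. The only cosmetic difference is that the paper extends the LSC restriction $c^m|_{(D\cup B)\times F}$ directly to a bounded LSC function $\tilde c^m$ on all of $\X\times\A$ (via Tietze--Urysohn on a monotone sequence of continuous minorants) and then applies the LSC Portmanteau inequality once, whereas you keep the continuous approximants $\tilde g_\ell$ explicit and pass to the limit in $\ell$ afterward; these are equivalent packagings of the same argument.
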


\begin{proof}
For $m \geq 0$, define $c^{m} : \X \times \A \to \R$ by $c^m(x, a) = \min\{ c(x,a), m\}$. 
Since $\liminf_{k \to \infty} \int c \, d \bar \gamma_{n_k} \, \geq \,   \liminf_{k \to \infty} \int c^m \, d \bar \gamma_{n_k}$ and
$\int c^m \, d \bar \gamma \uparrow \int c \, d \bar \gamma$ as $m \to \infty$ by the monotone convergence theorem, 
to prove (\ref{eq-prf-su1}), it suffices to prove
\begin{equation} \label{eq-prf-su3}
 \liminf_{k \to \infty} \int c^m \, d \bar \gamma_{n_k} \geq \int  c^m \, d \bar \gamma.
\end{equation}

To compare the integrals in (\ref{eq-prf-su3}), consider an arbitrary $\epsilon > 0$. There exists a sufficiently large $j$ such that 
for the compact set $\Gamma_{j}$ in Assumption~\ref{cond-pc-3}(SU), 
its complement satisfies that
\begin{equation} \label{eq-prf-su4}
 \bar \gamma_{n}\big( \Gamma^c_{j} \big)   \leq \epsilon, \quad \forall \, n \geq 1, \qquad \text{and} \qquad \bar \gamma\big( \Gamma_{j}^c \big)  \leq \epsilon.
\end{equation}
In the above, the existence of such $j$ and the first inequality in (\ref{eq-prf-su4}) follow from Assumption~\ref{cond-pc-3}(SU) and the fact $\sup_{n} \int c \, d \bar \gamma_n < \infty$. The second inequality in (\ref{eq-prf-su4}) follows from the fact that $\Gamma_{j}^c$ is an open set and hence, as the weak limit of $\{\bar \gamma_{n_k}\}$, $\bar \gamma$ satisfies that $\bar \gamma\big( \Gamma_{j}^c\big) \leq \liminf_{k \to \infty}  \bar \gamma_{n_k}\big( \Gamma_{j}^c \big)$ by \cite[Theorem 11.1.1]{Dud02}.

Fix this $j$. We use (\ref{eq-prf-su4}) to bound the integrals $\int_{\Gamma_{j}^c} c^m \, d \bar \gamma_{n_k}$ and $\int_{\Gamma_{j}^c}  c^m \, d \bar \gamma$ by $m \, \epsilon$.

We now use Lusin's theorem to handle the integrals of $c^m$ on $\Gamma_{j}$.
Define compact sets $K : = \proj_\X (\Gamma_{j}) \subset \X$ and $F:=\proj_\A (\Gamma_{j}) \subset \A$.
Let $O \supset K$ be the open set, $D$ the closed set, and $\nu$ the finite measure in Assumption~\ref{cond-pc-3}(M) for the given $K$.
By Assumption~\ref{cond-pc-3}(M), $c$ is lower semicontinuous on the closed set $D \times \A$; therefore, so is $c^m$ on $D \times \A$.
Since the set $F$ is finite, applying Lusin's Theorem~\cite[Theorem 7.5.2]{Dud02}, for any $\delta > 0$, we can choose a closed set $B \subset \X$ with $\nu(\X \setminus B) \leq \delta$ such that $c^m$ is continuous on $B \times F$.
\footnote{Details: We apply Lusin's Theorem for each $a \in F$ to obtain a closed set $B_a \subset \X$ such that $\nu(\X \setminus B_a) \leq \delta/|F|$ and $c^m(\cdot, a)$ is continuous on $B_a$. We then take $B = \cap_{a \in F} B_a$.\label{footnote-lysin}}
Then $c^m$ is lower semicontinuous on the closed set $(D \cup B) \times F$. 
By the Tietze-Urysohn extension theorem~\cite[Theorem 2.6.4]{Dud02} and an approximation property for lower semicontinuous functions \cite[Lemma 7.14]{bs}, the restriction of $c^m$ to $(D \cup B) \times F$ can be extended to a nonnegative lower semicontinuous function $\tilde c^m$ on $\X \times \A$ with the extension also bounded above by $m$.
\footnote{Details: Denote the restriction of $c^m$ to $(D \cup B) \times F$ by $f$. Since it is lower semicontinuous and nonnegative, by \cite[Lemma 7.14]{bs}, there exists a sequence of nonnegative continuous functions $\{f_n\}$ on $(D \cup B) \times F$ with $f_n \uparrow f$. 
We apply the Tietze-Urysohn extension theorem~\cite[Theorem 2.6.4]{Dud02} to extend each $f_n$ to a continuous function $\tilde f_n$ on $\X \times \A$ that is also nonnegative and bounded above by $m$. We then let $\tilde c^m = \sup_{n} \tilde f_n$.}
For the function $\tilde c^m$, since $\bar \gamma_{n_k} \wto \bar \gamma$, by \cite[Prop.\ E.2]{HL96},
\begin{equation} \label{eq-prf-su5}
\liminf_{k \to \infty} \int \tilde c^m d \bar \gamma_{n_k} \geq \int \tilde c^m d \bar \gamma.
\end{equation}

We now compare the integrals of $c^m$ with those of $\tilde c^m$ and bound their differences:
\begin{align}
  \left| \, \int_{\X \times \A}  \big(c^m - \tilde c^m\big)  \, d \bar \gamma_{n_k}  \,  \right| & \, = \,  \left| \, \int_{\big((D \cup B) \times F\big)^c}  \big(c^m - \tilde c^m\big) \, d \bar \gamma_{n_k}  \,  \right|  \notag \\
  & \, \leq \, \int_{\big(K \setminus (D \cup B) \big) \times F}  \big| c^m - \tilde c^m \big| \, d \bar \gamma_{n_k}    + \int_{(K \times F)^c}  \big| c^m - \tilde c^m \big| \, d \bar \gamma_{n_k} \notag \\
  & \, \leq \, m \int_{\big(O \setminus (D \cup B) \big) \times \A} \, d \bar \gamma_{n_k} + m \,\epsilon \label{eq-prf-su6a}\\
  & \, \leq \,  m \, \nu(B^c) + m \, \epsilon \label{eq-prf-su6b} \\
  &  \, \leq \, m \, (\delta + \epsilon), \label{eq-prf-su6c}
\end{align}
where we used the facts $K \subset O$, $(K \times F)^c \subset \Gamma_{j}^c$ and the inequality (\ref{eq-prf-su4}) to derive (\ref{eq-prf-su6a}); and we used Lemma~\ref{lem-su-mp} and the fact $\nu(\X \setminus B) \leq \delta$ to derive (\ref{eq-prf-su6b}) and (\ref{eq-prf-su6c}), respectively. By the same arguments, for $\bar \gamma$, we also have
\begin{equation}\label{eq-prf-su6d}
 \left| \, \int_{\X \times \A}  \big(c^m - \tilde c^m\big)  \, d \bar \gamma  \,  \right|  \, \leq \,  m \, (\delta + \epsilon).
\end{equation}
By combining (\ref{eq-prf-su6c})-(\ref{eq-prf-su6d}) with (\ref{eq-prf-su5}), we have
$$ \liminf_{k \to \infty} \int c^m  d \bar \gamma_{n_k} \geq \int c^m d \bar \gamma - 2m \, ( \delta + \epsilon).$$
Since $\delta$ and $\epsilon$ are both arbitrary, we obtain
$\liminf_{k \to \infty} \int c^m  d \bar \gamma_{n_k} \geq \int c^m  d \bar \gamma$, which is (\ref{eq-prf-su3}) and implies the desired inequality (\ref{eq-prf-su1}) as discussed earlier.
\end{proof}

\begin{lemma} \label{lem-prf-su2}
The equality (\ref{eq-prf-su2}) holds.
\end{lemma}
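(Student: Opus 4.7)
The plan is to mimic the structure of the proof of Lemma~\ref{lem-prf-su1}, but now with Lusin's theorem applied to the transition kernel itself rather than to the cost. Fix $v \in \C_b(\X)$, put $M := \sup_x |v(x)|$, and define $(Tv)(x,a) := \int v(y)\, q(dy\,|\, x,a)$, a bounded Borel measurable but generally discontinuous function on $\X\times\A$. A direct telescoping identity gives
$$ \int Tv\, d\bar\gamma_n - \int v\, d\bar p_n \,=\, \tfrac{1}{n}\bigl(\E^\pi_\zeta[v(x_{n+1})] - \E^\pi_\zeta[v(x_1)]\bigr) \,\longrightarrow\, 0, $$
and because $\bar\gamma_{n_k}\wto\bar\gamma$, testing against functions depending only on $x$ shows $\bar p_{n_k}\wto\bar p$, hence $\int v\, d\bar p_{n_k}\to\int v\, d\bar p$. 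Therefore (\ref{eq-prf-su2}) will follow once I establish $\int Tv\, d\bar\gamma_{n_k}\to\int Tv\, d\bar\gamma$.

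To prove this, given $\epsilon>0$ I reuse the tightness argument from the proof of Lemma~\ref{lem-prf-su1}: pick $j$ large enough that $\bar\gamma_n(\Gamma_j^c)\le\epsilon$ for all $n$ and $\bar\gamma(\Gamma_j^c)\le\epsilon$, and set $K:=\proj_\X(\Gamma_j)$ and $F:=\proj_\A(\Gamma_j)$ (finite), with the associated open $O\supset K$, closed $D$, and finite measure $\nu$ supplied by Assumption~\ref{cond-pc-3}(M). On $D\times\A$ the kernel $q(dy\,|\, x,a)$ is continuous into $\P(\X)$, so $Tv$ is continuous on $D\times\A$. Since $\P(\X)$ is a separable metric space, I apply Lusin's Theorem~\ref{thm-lusin} to each Borel map $q(dy\,|\,\cdot,a):\X\to\P(\X)$, $a\in F$, with the finite measure $\nu$: for any $\delta>0$ there is a closed set $B\subset\X$ with $\nu(\X\setminus B)\le\delta$ on which all these finitely many maps are simultaneously continuous (take the finite intersection, cf.\ footnote~\ref{footnote-lysin}). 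Hence $Tv$ is continuous on $B\times F$, and therefore on the closed set $C:=(D\times\A)\cup(B\times F)$. By the Tietze--Urysohn theorem, the restriction of $Tv$ to $C$ extends to a continuous function $\widetilde{Tv}$ on $\X\times\A$ with $|\widetilde{Tv}|\le M$.

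Since $\widetilde{Tv}\in\C_b(\X\times\A)$ and $\bar\gamma_{n_k}\wto\bar\gamma$, we have $\int\widetilde{Tv}\, d\bar\gamma_{n_k}\to\int\widetilde{Tv}\, d\bar\gamma$. For the error, $Tv$ and $\widetilde{Tv}$ agree on $C$ and are both bounded by $M$; combined with the elementary inclusion $C^c\cap\Gamma_j\subset\bigl((O\setminus D)\cap B^c\bigr)\times\A$ (for $(x,a)\in C^c$ with $a\in F$ one must have $x\notin D$ and $x\notin B$) and Lemma~\ref{lem-su-mp}, this gives
$$ \bar\gamma_{n_k}(C^c) \,\le\, \bar\gamma_{n_k}(\Gamma_j^c) + \bar p_{n_k}\bigl((O\setminus D)\cap B^c\bigr) \,\le\, \epsilon + \delta, $$
and analogously $\bar\gamma(C^c)\le\epsilon+\delta$ using the first bound in Lemma~\ref{lem-su-mp}. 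Consequently $\bigl|\int(Tv-\widetilde{Tv})\, d\mu\bigr|\le 2M(\epsilon+\delta)$ for $\mu\in\{\bar\gamma_{n_k},\bar\gamma\}$, whence $\limsup_k\bigl|\int Tv\, d\bar\gamma_{n_k}-\int Tv\, d\bar\gamma\bigr|\le 4M(\epsilon+\delta)$. Letting $\epsilon,\delta\downarrow 0$ finishes the proof. The main obstacle is exactly the possible discontinuity of $q$ in $(x,a)$; it is the two-layer structure of $C$---continuity built into $D\times\A$ plus continuity manufactured by Lusin on $B\times F$---together with the uniform control of $\bar p_{n_k}$ and $\bar p$ on $(O\setminus D)\cap B^c$ supplied by Lemma~\ref{lem-su-mp} that lets the error be pushed below any prescribed threshold.
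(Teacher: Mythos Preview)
Your proof is correct and follows essentially the same route as the paper's: reduce (\ref{eq-prf-su2}) to the convergence $\int Tv\, d\bar\gamma_{n_k}\to\int Tv\, d\bar\gamma$, then use Lusin's theorem on the kernel to manufacture a closed set $B$ on which $q$ is continuous, extend the resulting continuous function via Tietze--Urysohn, and bound the error through Lemma~\ref{lem-su-mp} and the tightness estimate (\ref{eq-prf-su4}). The only cosmetic differences are that your telescoping identity gives (\ref{eq-prf-su6e}) directly (the paper cites a martingale argument from \cite{HL96}), and you take $C=(D\times\A)\cup(B\times F)$ whereas the paper uses the slightly smaller $(D\cup B)\times F$; both choices work for the same reasons.
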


\begin{proof}
Recall that $\bar p_{n}$ and $\bar p$ are the marginals of $\bar \gamma_{n}$ and $\bar \gamma$, respectively, on $\X$.
For any $v \in \C_b(\X)$, since $\bar \gamma_{n_k} \wto \bar \gamma$, the right-hand side of (\ref{eq-prf-su2}) satisfies
$$ \int v \, d \bar p = \lim_{k \to \infty} \int v \, d \bar p_{n_k}.$$
The same proof given in \cite[p.\ 119]{HL96} (which is based on a martingale argument) establishes that
\begin{equation} \label{eq-prf-su6e}
  \lim_{n \to \infty} \left\{ \int_{\X \times \A} \int_{\X} v(y) \, q(dy \mid x, a) \, \bar \gamma_{n} (d(x,a)) -  \int_{\X} v(x) \, \bar p_{n}(dx)  \right\} \, =  \, 0.
\end{equation}  
Therefore, to prove (\ref{eq-prf-su2}), it suffices to show that for any $v \in \C_b(\X)$,
\begin{equation} \label{eq-prf-su6}
\lim_{k \to \infty} \int_{\X \times \A} \int_{\X} v(y) \, q(dy \mid x, a) \, \bar \gamma_{n_k} (d(x,a)) = \int_{\X \times \A} \int_{\X} v(y) \, q(dy \mid x, a) \, \bar \gamma (d(x,a)).
\end{equation}

Let $\epsilon > 0$. Let the sets $\Gamma_{j}$, $K \subset O$, $F$ and $D$, and the finite measure $\nu$ be as in the proof of Lemma~\ref{lem-prf-su1} (in particular, recall that $j$ is sufficiently large so that (\ref{eq-prf-su4}) holds). 
Recall that by Assumption~\ref{cond-pc-3}(M), $q(dy \mid x, a)$ is continuous on $D \times \A$.
Since the space $\P(\X)$ is separable and metrizable \cite[Prop.\ 7.20]{bs} and the set $F$ is finite, applying Lusin's Theorem~\cite[Theorem 7.5.2]{Dud02} (see Footnote~\ref{footnote-lysin}), for any $\delta > 0$, we can choose a closed set $B \subset \X$ with $\nu(\X \setminus B) \leq \delta$ such that $q(dy \mid x, a)$ is continuous on $B \times F$.
Then $q(dy \mid x, a)$ is continuous on the closed set $(D \cup B) \times F$, so by \cite[Prop.~7.30]{bs}, $\phi(x,a) : = \int_\X v(y) \, q(dy \mid x, a)$ is a bounded continuous function on the closed set $(D \cup B) \times F$, and by the Tietze-Urysohn extension theorem~\cite[Theorem~2.6.4]{Dud02}, this restriction of $\phi$ to $(D \cup B) \times F$ can be extended to a continuous function $\tilde \phi$ on $\X \times \A$ with $\|\tilde \phi\|_\infty \leq \| \phi\|_\infty \leq \| v\|_\infty$.

Since $\tilde \phi$ is bounded and continuous and $\bar \gamma_{n_k} \wto \bar \gamma$, we have
\begin{equation} \label{eq-prf-su7}
\lim_{k \to \infty} \int \tilde \phi  \, d \bar \gamma_{n_k} = \int \tilde \phi  \, d \bar \gamma .
\end{equation}
We now compare the integrals of $\phi$ with those of $\tilde \phi$ and bound their differences, similarly to the derivation of (\ref{eq-prf-su6c})-(\ref{eq-prf-su6d}):
\begin{align}
 \left| \int_{\X \times \A} \big( \phi - \tilde \phi \big) \, d \bar \gamma_{n_k}  \right| & \, = \,  \left| \int_{ \big( (D \cup B) \times F \big)^c} \big( \phi - \tilde \phi \big) \, d \bar \gamma_{n_k} \right|  \notag\\
 & \, \leq \, \int_{\big(K \setminus (D \cup B) \big) \times F}  \big| \phi - \tilde \phi \big| \, d \bar \gamma_{n_k}    + \int_{(K \times F)^c}  \big| \phi - \tilde \phi \big| \, d \bar \gamma_{n_k}  \notag\\
 & \, \leq \, 2 \| v\|_\infty \cdot \int_{ \big(O \setminus (D \cup B)\big) \times \A}  d \bar \gamma_{n_k}  + 2 \| v\|_\infty \cdot \epsilon  \notag\\
  & \, \leq \, 2 \| v\|_\infty \cdot \nu(B^c) + 2 \| v\|_\infty \cdot \epsilon  \notag \\
  & \, \leq \, 2 \| v\|_\infty \cdot (\delta + \epsilon), \label{eq-prf-su7a}
\end{align}
where, as before, we used the following sets of relations to derive the last three inequalities, respectively: $K \subset O$ and $(K \times F)^c \subset \Gamma_{j}^c$ together with (\ref{eq-prf-su4}); Lemma~\ref{lem-su-mp}; and the fact $\nu(\X \setminus B) \leq \delta$ by the choice of $B$. By the same arguments, for the integrals w.r.t.\ $\bar \gamma$, we also have
\begin{equation} \label{eq-prf-su7b}
  \left| \int_{\X \times \A} \big( \phi - \tilde \phi \big) \, d \bar \gamma \, \right| \, \leq \, 2 \| v\|_\infty \cdot (\delta + \epsilon).
\end{equation}  
Combining the three relations (\ref{eq-prf-su7})-(\ref{eq-prf-su7b}), we have
$$ \limsup_{k \to \infty} \left| \int \phi  \, d \bar\gamma_{n_k}  - \int  \phi \, d \bar \gamma  \, \right|  \, \leq \, 4 \| v \|_\infty \cdot (\delta + \epsilon ). $$
Since $\delta$ and $\epsilon$ are arbitrary,  we obtain the desired inequality (\ref{eq-prf-su6}), which implies (\ref{eq-prf-su2}), as discussed earlier.
\end{proof}

\begin{proof}[Proof of Prop.~\ref{prp-su-mp}]
The proposition follows from Lemmas~\ref{lem-prf-su1}-\ref{lem-prf-su2} and the discussion given immediately before Lemma~\ref{lem-su-mp}.
\end{proof}

\begin{proof}[Proof of Theorem~\ref{thm-su-mp}]
Apply Prop.~\ref{prp-su-mp} to construct a sequence of stationary pairs $(\bar \mu_n, \bar p_n) \in \Delta_s$ with finite average costs $J(\bar \mu_n, \bar p_n) \downarrow \rho^*$. 
For each pair $(\bar \mu_n, \bar p_n)$, since $\bar \gamma_n(d(x,a)) : = \bar \mu_n(da \,|\, x) \, \bar p_n(dx)$ is an invariant probability measure of the Markov chain induced by $\bar \mu_n$ on the state-action space, $\int c \, d \bar \gamma_n = J(\bar \mu_n, \bar p_n)$. So $\int c \, d \bar \gamma_n$ is bounded by some constant for all $n$. In view of Assumption~\ref{cond-pc-3}(SU), this implies, as in the preceding proofs, that $\{\bar \gamma_n\}$ is tight and there is a subsequence $\bar \gamma_{n_k} \wto \bar \gamma \in \P(\X \times \A)$ with $\bar \gamma(\Gamma) =1$. The rest of the proof now parallels that of Prop.~\ref{prp-su-mp}.
Decompose $\bar \gamma$ into the marginal $\bar p$ on $\X$ and a Borel measurable stochastic kernel $\bar \mu(da \,|\, x)$ on $\A$ given $X$ that obeys the control constraint. To prove the theorem, we need to show that (\ref{eq-prf-su2}) and (\ref{eq-prf-su1}) hold for $\bar \gamma$ and $\{\bar \gamma_{n_k}\}$ in this case.

For all $n \geq 1$, we have $\bar p_n(E)  = \int q(E \,|\, x, a) \, \bar \gamma_n(d(x,a))$ for all $E \in \B(\X)$, by the invariance property of $\bar p_n$. It follows from this relation and Assumption~\ref{cond-pc-3}(M) that the conclusion of Lemma~\ref{lem-su-mp} holds for $\bar p_n$ here, and then the second half of the proof of that lemma shows that its conclusion also holds for $\bar p$ in this case. 
We then use Lemma~\ref{lem-prf-su2} to prove that for any $v \in \C_b(\X)$, (\ref{eq-prf-su2}) holds. 
Since $(\bar \gamma_n, \bar p_n) \in \Delta_s$, instead of (\ref{eq-prf-su6e}), the equality holds for every $n$:
$$ \textstyle{\int_{\X \times \A} \int_{\X} v(y) \, q(dy \mid x, a) \, \bar \gamma_{n} (d(x,a)) =  \int_{\X} v(x) \, \bar p_{n}(dx)},$$
so proving (\ref{eq-prf-su2}) also amounts to showing that (\ref{eq-prf-su6}) holds, and the arguments are the same as those given in the proof of Lemma~\ref{lem-prf-su2}. This establishes that $\bar p$ is an invariant probability measure associated with $\bar \mu$, so $(\bar \mu, \bar p) \in \Delta_s$. 
Finally, the proof for (\ref{eq-prf-su1}) in this case is exactly the same as the proof of Lemma~\ref{lem-prf-su1}, and this establishes that $\int c \, d\bar \gamma = \lim_{k \to \infty} \int c \, d \bar \gamma_{n_k} = \rho^*$.
Hence $(\bar \mu, \bar p)$ is a stationary minimum pair.
\end{proof}

\subsection{Proof of Prop.~\ref{prp-malpha}}

The proof is similar to that of Prop.~\ref{prp-su-mp} except that it involves a different sequence of probability measures, from which a stationary pair will be constructed to have average cost no greater than $\liminf_{\alpha \uparrow 1} \, (1 - \alpha) \, m_\alpha$.

We start similarly to the proof in \cite[Sec.\ 6]{VAm99}.
Let $\alpha \in (0,1)$. For each policy $\pi$ and initial distribution $\zeta$, define a probability measure $\gamma^{(\alpha)}_{\pi, \zeta}$ on $\B(\X \times \A)$ by
\begin{equation} \label{eq-prf-ma-1}
 \gamma^{(\alpha)}_{\pi, \zeta}(B) : = (1 - \alpha) \, \textstyle{ \sum_{n=0}^\infty } \, \alpha^n \, \Pr^\pi_\zeta \big\{  (x_n, a_n) \in B \big\} \,,\qquad \forall \, B \in \B(\X \times \A).
\end{equation} 
Note that $\gamma^{(\alpha)}_{\pi, \zeta}(\Gamma) = 1$ and we can express the $\alpha$-discounted expected cost $v_\alpha(\pi, \zeta)$ as 
$v_\alpha(\pi, \zeta) = (1 - \alpha)^{-1} \int c \, d \gamma^{(\alpha)}_{\pi, \zeta}$.
Note also a relation between $\gamma^{(\alpha)}_{\pi, \zeta}$, the marginal of $\gamma^{(\alpha)}_{\pi, \zeta}$ on $\X$, and the state transition stochastic kernel $q(y \,|\, x,a)$:
\begin{equation} \label{eq-prf-ma2}
 \gamma^{(\alpha)}_{\pi, \zeta}\big(B \times \A\big) = (1 - \alpha) \, \zeta(B) + \alpha \int_{\X \times \A} q(B \mid x, a) \,  \gamma^{(\alpha)}_{\pi, \zeta}\big(d(x,a)\big) \,,\quad \forall \, B \in \B(\X).
\end{equation}  
As in \cite{VAm99}, this relation will be important later in our proof to show that a pair of stationary policy and initial distribution constructed in the proof is a stationary pair.

As discussed before Prop.~\ref{prp-malpha}, by a Tauberian theorem,
$$ \underline{\rho}: = \liminf_{\alpha \uparrow 1} \, (1 - \alpha) \, m_\alpha \leq \limsup_{\alpha \uparrow 1} \, (1 - \alpha) \, m_\alpha  \leq \rho^*.$$
Thus there exists a sequence $\alpha_n \uparrow 1$ and a corresponding sequence $\{(\pi_n, \zeta_n)\}$ of policy and initial distribution pairs such that $(1 - \alpha_n) \, v_{\alpha_n}(\pi_n, \zeta_n) \to \underline{\rho}$. 
In other words, for the corresponding sequence of probability measures $\bar \gamma_n: = \gamma^{(\alpha_n)}_{\pi_n, \zeta_n}$, we have 
\begin{equation} \label{eq-prf-ma3}
 \textstyle{ \int c \, d \bar \gamma_n }  \to \underline{\rho} \leq \rho^* < \infty.
\end{equation}
Then, similarly to the proof for Prop.~\ref{prp-su-mp}, we can extract a weakly convergent subsequence $\{\bar \gamma_{n_k}\}$ and decompose its limit $\bar \gamma$ into the marginal $\bar p$ on $\X$ and a stochastic kernel $\bar \mu(da \,|\, x)$ that corresponds to a stationary policy. 

Let $\bar p_n$ denote the marginal of $\bar \gamma_n$ on $\X$. We shall need the following majorization properties for $\bar p_n$ and $\bar p$, which are similar to those in Lemma~\ref{lem-su-mp}:

\begin{lemma} \label{lem-su-disc-mp}
Let the open set $O$, the closed set $D$, and the finite measure $\nu$ on $\B(\X)$ be as in Assumption~\ref{cond-pc-3}(M) for some $K \in \{\proj_\X(\Gamma_j)\}$. Then for all $B \in \B(\X)$,
$$ \bar p \big(  (O \setminus D) \cap B  \big) \leq \nu(B), \qquad  \bar p_n \big( (O \setminus D) \cap B \big) \leq \nu(B) + (1 - \alpha_n), \quad  \forall \, n \geq 1.$$
\end{lemma}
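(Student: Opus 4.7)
The plan is to mimic the proof of Lemma~\ref{lem-su-mp} but with the key relation (\ref{eq-prf-ma2}) replacing the invariance identity used there; the extra $(1-\alpha_n)$ in the bound for $\bar p_n$ will be exactly the ``boundary'' term from this relation, which vanishes in the limit and therefore does not appear in the bound for $\bar p$.

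First, I would fix $B \in \B(\X)$ and set $E = (O \setminus D) \cap B$. Specializing (\ref{eq-prf-ma2}) to $E$ gives
\[
\bar p_n(E) \,=\, \bar\gamma_n(E \times \A) \,=\, (1 - \alpha_n)\,\zeta_n(E) \,+\, \alpha_n \int_{\X \times \A} q(E \mid x,a)\,\bar\gamma_n(d(x,a)).
\]
Since $\bar\gamma_n$ is concentrated on $\Gamma$, Assumption~\ref{cond-pc-3}(M) gives $q(E \mid x,a) \leq \nu(B)$ for $\bar\gamma_n$-almost every $(x,a)$, so the integral on the right is at most $\nu(B)$; the first term is at most $(1 - \alpha_n)$ since $\zeta_n$ is a probability measure. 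Combining these yields the second inequality of the lemma.

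Next, to bound $\bar p$, I would first restrict attention to open $B \subset \X$, so that $E = (O\setminus D)\cap B$ is open and hence $E \times \A$ is open in $\X \times \A$. Since $\bar\gamma_{n_k} \wto \bar\gamma$, the portmanteau theorem \cite[Theorem~11.1.1]{Dud02} gives
\[
\bar p(E) \,=\, \bar\gamma(E \times \A) \,\leq\, \liminf_{k \to \infty} \bar\gamma_{n_k}(E \times \A) \,=\, \liminf_{k \to \infty} \bar p_{n_k}(E) \,\leq\, \nu(B) + \liminf_{k\to\infty}(1 - \alpha_{n_k}) \,=\, \nu(B),
\]
using $\alpha_{n_k} \uparrow 1$. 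This establishes the first inequality for every open $B$.

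Finally, to extend the inequality from open $B$ to arbitrary $B \in \B(\X)$, I would reuse verbatim the closed-regularity argument from the last paragraph of the proof of Lemma~\ref{lem-su-mp}: defining $\bar p'(\cdot) := \bar p((O \setminus D) \cap \cdot)$, both $\bar p'$ and $\nu$ are finite Borel measures on a metric space and hence closed regular by \cite[Theorem~7.1.3]{Dud02}, which implies they are also outer regular, so $\bar p'(B) = \inf\{\bar p'(F) : F \supset B,\ F \text{ open}\}$ and analogously for $\nu$, and the inequality $\bar p'(F) \leq \nu(F)$ for open $F$ then propagates to $\bar p'(B) \leq \nu(B)$ for arbitrary $B \in \B(\X)$. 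No step here looks genuinely hard; the only point requiring care is that (\ref{eq-prf-ma2}) is what substitutes for invariance, and it is what forces the $(1 - \alpha_n)$ correction to appear in the pre-limit bound but disappear in the limit.
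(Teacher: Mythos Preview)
Your proposal is correct and follows essentially the same approach as the paper's proof, which simply states that the bound for $\bar p_n$ follows from (\ref{eq-prf-ma2}) and Assumption~\ref{cond-pc-3}(M), then passes to the limit along the weakly convergent subsequence (using $\alpha_n\to1$) to get the bound for $\bar p$ on open $B$, and finally invokes the closed-regularity argument from the proof of Lemma~\ref{lem-su-mp} to extend to all Borel $B$. You have just spelled out in full the details that the paper leaves implicit.
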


\begin{proof}
The inequality for $\bar p_n$ follows from (\ref{eq-prf-ma2}) and Assumption~\ref{cond-pc-3}(M). Since $\alpha_n \to 1$ and $\bar \gamma_{n_k} \wto \bar \gamma$, this inequality implies $\bar p \big((O \setminus D) \cap B  \big) \leq \nu(B)$ for all open sets $B$. Then the same relation must hold for all Borel sets $B$, as we showed in the second half of the proof of Lemma~\ref{lem-su-mp}.
\end{proof} 

We can now proceed as in the proof of Prop.~\ref{prp-su-mp}. We need to show that (\ref{eq-prf-su2}) and (\ref{eq-prf-su1}) hold for $\bar \gamma$ and $\{\bar \gamma_{n_k}\}$ in this case. To prove (\ref{eq-prf-su1}), we argue as in the proof of Lemma~\ref{lem-prf-su1}, except that we use Lemma~\ref{lem-su-disc-mp} in place of Lemma~\ref{lem-su-mp}. The former lemma differs from the latter in the extra term $(1 - \alpha_n)$ in the majorization inequality for $\bar p_n$. 
However, since $1- \alpha_n \to 0$, the proof of Lemma~\ref{lem-prf-su1} can be obviously modified to incorporate this diminishing term for the case considered here. The result is the inequality (\ref{eq-prf-su1}), that is, 
\begin{equation} \label{eq-prf-ma4a}
\int c \, d\bar \gamma \leq \lim_{k \to \infty} \int c \, d \bar \gamma_{n_k} = \underline{\rho}.
\end{equation}

To prove (\ref{eq-prf-su2}), as in \cite[Sec.\ 6]{VAm99}, we start with the observation that 
for any $v \in \C_b(\X)$ and $n \geq 1$, by (\ref{eq-prf-ma2}), 
$$  \int_\X v(x) \, \bar p_n(dx) = (1 - \alpha_n) \, \int_\X v(x) \,  \zeta_n(dx)  + \alpha_n \int_{\X \times \A} \int_{\X} v(y) \, q(dy \mid x, a) \, \bar \gamma_n \big(d(x,a)\big) \,,$$
and since $\alpha_n \to 1$, this implies that
\begin{equation} \label{eq-prf-ma4}
 \lim_{n \to \infty} \left\{ \int_{\X \times \A} \int_{\X} v(y) \, q(dy \mid x, a) \, d\bar \gamma_n\big(d(x,a)\big) - \int_\X v(x) \,  \bar p_n(dx)  \right\} = 0.
\end{equation}  
Using (\ref{eq-prf-ma4}) (which has an identical expression as~(\ref{eq-prf-su6e})), we can now proceed as in the proof of Lemma~\ref{lem-prf-su2}, except that we apply Lemma~\ref{lem-su-disc-mp} instead of Lemma~\ref{lem-su-mp} and take care of the slight difference between the two lemmas, as explained above. This gives us $(\bar \mu, \bar p) \in \Delta_s$, 
and therefore $J(\bar \mu, \bar p) = \int c \, d\bar \gamma \leq \underline{\rho} \leq \rho^*$ by (\ref{eq-prf-ma4a}) and (\ref{eq-prf-ma3}). Since $J(\bar \mu, \bar p) \geq \rho^*$, it follows that $\underline{\rho} = \rho^*$. The proof of Prop.~\ref{prp-malpha} is now complete.

\subsection{Proof of Theorem~\ref{thm-su-mp2}}

\subsubsection{Part (a)}
The inequality (\ref{eq-thm2-a1}) in Theorem~\ref{thm-su-mp2}(a),
$$  \iJ(\pi, \zeta) \geq \rho^*, \qquad \forall \, \pi \in \Pi, \ \zeta \in \P(\X),$$
has, in fact, already been established in the proof of Prop.~\ref{prp-su-mp}, where a stationary pair $(\bar \mu, \bar p)$ was constructed to have $J(\bar \mu, \bar p) \leq \iJ(\pi, \zeta)$, as we recall. 

We now prove the inequality (\ref{eq-thm2-a2}) in Theorem~\ref{thm-su-mp2}(a) concerning pathwise average costs:
for all $(\pi, \zeta) \in \Pi \times \P(\X)$,
$$ \liminf_{n \to \infty} \, n^{-1} \textstyle{ \sum_{k=0}^{n-1} c(x_k, a_k)} \geq \rho^*, \qquad \text{$\Pr^{\pi}_\zeta$-almost surely}.$$
The idea of the proof is the same as that of \cite[Theorem 3.4]{VAm99} and similar to that of Prop.~\ref{prp-su-mp}: it is to construct a stationary pair $(\bar \mu, \bar p)$ with 
$$J(\bar \mu, \bar p) \leq  \liminf_{n \to \infty} \, n^{-1} \textstyle{ \sum_{k=0}^{n-1} c(x_k, a_k)}$$
for each sample path, except for the sample paths from a set of $\Pr^{\pi}_\zeta$-measure $0$. 

To prepare for the proof, we first define some notations and give two lemmas that we will need.
Consider the process $\{(x_n,a_n)\}$ induced by an arbitrary pair $(\pi, \zeta) \in \Pi \times \P(\X)$. 
Let $\Omega$ denote the sample space and $\omega$ a point in $\Omega$.
For $n \geq 1$, define occupancy measures $\bar \gamma_n^\omega$, which are $\P(\X \times \A)$-valued random variables, by 
$$ \textstyle{\bar \gamma_n^\omega(B) : = \frac{1}{n} \sum_{k=1}^n \ind\big[(x_k, a_k) \in B\big]}, \qquad B \in \B(\X \times \A),$$
where $\ind[ \cdot]$ is the indicator function.
Let $\bar p_n^\omega$ denote the marginal of $\bar \gamma_n^\omega$ on $\X$.

We will need the following lemma. It is similar to but differs from Lemma~\ref{lem-su-mp} mostly in that the desired majorization property holds for \emph{each} Borel set almost surely.

\begin{lemma} \label{lem-su-pathwise-mp}
Let the open set $O$, the closed set $D$, and the finite measure $\nu$ on $\B(\X)$ be as in Assumption~\ref{cond-pc-3}(M) for some $K \in \{\proj_\X(\Gamma_j)\}$. Then for each $E \in \B(\X)$, 
\begin{equation} \label{eq-lem-su-pathwise-mp}
 \limsup_{n \to \infty} \, \bar p_n^\omega \big( (O \setminus D) \cap E \big) \leq \nu(E), \qquad \Pr^\pi_\zeta\text{-almost surely}.
\end{equation} 
\end{lemma}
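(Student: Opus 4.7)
The plan is to reduce the statement to a one-step bound coming from Assumption~\ref{cond-pc-3}(M) and to handle the difference between empirical and conditional averages by a martingale strong law. Fix an arbitrary $E \in \B(\X)$ and let $\F_n := \sigma(x_0, a_0, \ldots, x_n, a_n)$ be the natural filtration of the controlled process under $\Pr^\pi_\zeta$. Define the bounded indicators
$$ Y_k := \ind\!\big[x_k \in (O \setminus D) \cap E\big], \qquad k \geq 1,$$
so that $\bar p_n^\omega\big((O \setminus D) \cap E\big) = n^{-1}\sum_{k=1}^n Y_k$. Applying the majorization bound in Assumption~\ref{cond-pc-3}(M) with the Borel set $B = E$ at the (random but admissible) state-action pair $(x_{k-1}, a_{k-1})$, and using the fact that $\Pr^\pi_\zeta$ is constructed from the transition kernel $q(\cdot \mid x, a)$, I obtain
$$ \E^\pi_\zeta\!\big[ Y_k \mid \F_{k-1} \big] = q\big((O \setminus D) \cap E \mid x_{k-1}, a_{k-1}\big) \leq \nu(E), \qquad \Pr^\pi_\zeta\text{-a.s.}, \ \forall \, k \geq 1.$$

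Next I would introduce the martingale $S_n := \sum_{k=1}^n \big(Y_k - \E^\pi_\zeta[Y_k \mid \F_{k-1}]\big)$ relative to $\{\F_n\}$ and note that its increments are bounded by $1$ in absolute value. A standard martingale strong law (e.g., a direct consequence of Azuma--Hoeffding together with a Borel--Cantelli argument applied along the subsequence $n = 2^j$, or the Chow--Robbins $L^2$-martingale SLLN) then yields $n^{-1} S_n \to 0$ $\Pr^\pi_\zeta$-almost surely. Combining this with the pointwise bound on the conditional means gives
$$ \limsup_{n \to \infty} \bar p_n^\omega\big((O \setminus D) \cap E\big) \, \leq \, \limsup_{n \to \infty} \frac{1}{n}\sum_{k=1}^n \E^\pi_\zeta\!\big[Y_k \mid \F_{k-1}\big] \, \leq \, \nu(E), \qquad \Pr^\pi_\zeta\text{-a.s.},$$
which is the inequality~(\ref{eq-lem-su-pathwise-mp}).

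There is no real obstacle here, but two points deserve care. First, the statement only asks for the bound to hold almost surely for \emph{each} fixed $E$, so the exceptional null set is allowed to depend on $E$; one should not try to turn this into a simultaneous statement over all Borel sets without an extra uniformity argument (none is needed for the use made of this lemma in the proof of~(\ref{eq-thm2-a2})). Second, the identity $\E^\pi_\zeta[Y_k \mid \F_{k-1}] = q(\cdot \mid x_{k-1}, a_{k-1})$ relies only on the construction of $\Pr^\pi_\zeta$ from $q(dy \mid x, a)$ via \cite[Prop.~7.28]{bs}; it does not require the policy $\pi$ to be Markov, because the conditioning on $\F_{k-1}$ already determines $a_{k-1}$ via $\mu_{k-1}$. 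Contrast this with Lemma~\ref{lem-su-mp}, where the corresponding bound for the \emph{expected} empirical marginals was obtained, and the present lemma can be viewed as its pathwise strengthening.
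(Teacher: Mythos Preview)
Your proof is correct and follows essentially the same route as the paper's: decompose $\bar p_n^\omega((O\setminus D)\cap E)$ into the average of conditional means (bounded by $\nu(E)$ via Assumption~\ref{cond-pc-3}(M)) plus the average of a bounded martingale-difference sum, and kill the latter with a martingale strong law. The paper cites \cite[Theorem~2.18]{HaH80} for that last step, which plays the role of your Chow--Robbins/Azuma--Hoeffding argument; your remarks about the null set depending on $E$ and about not needing $\pi$ to be Markov are also in line with the paper's own observations.
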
 

\begin{proof}
For a Borel set $E \subset \X$ and $n \geq 1$, 
let $Y_n := \ind\big[x_n  \in (O \setminus D) \cap E \,\big]$ and
$$ \textstyle{Z_n := \sum_{k=1}^n \E^{\pi}_\zeta \big[ Y_k \mid \F_{k-1} \big], \qquad S_n := \sum_{k=1}^n \big(  Y_k - \E^{\pi}_\zeta \big[ Y_k \mid \F_{k-1} \big] \big),}$$
where $\F_k$ denotes the $\sigma$-algebra generated by $(x_0, a_0, \ldots, x_k, a_k)$. 
Then 
$$ \bar p_n^\omega \big( (O \setminus D) \cap E \big) = n^{-1} Z_n + n^{-1} S_n.$$
By the Markov property and Assumption~\ref{cond-pc-3}(M), almost surely,
$$ \E^{\pi}_\zeta \big[ Y_k \mid \F_{k-1} \big] = q\big( (O \setminus D) \cap E \mid x_{k-1}, a_{k-1} \big) \leq \nu(E), \quad \forall \, k \geq 1,$$
and therefore, $n^{-1} Z_n \leq \nu(E)$ almost surely.
Since the $Y_k$'s lie in $[0,1]$, $S_n$ is the partial sum of a bounded Martingale difference sequence, so $n^{-1} S_n \to 0$ almost surely by \cite[Theorem 2.18]{HaH80}.  Hence $\limsup_{n \to \infty} \, \bar p_n^\omega \big( (O \setminus D) \cap E \big) \leq \nu(E)$ almost surely.
\end{proof}

We will also need a known relation similar to (\ref{eq-prf-su6e}), but its proof is slightly different (see \cite[p.\ 375-376]{VAm99}; \cite[p.\ 195-196]{HL99}) and is given below for completeness. 
The difference between (\ref{eq-prf-su6e}) and this relation (\ref{eq-lem-su-pathwise-invm}) is that whereas the former holds for all functions in $\C_b(\X)$, the latter holds for \emph{each} function in $\C_b(\X)$ almost surely.

\begin{lemma} \label{lem-su-pathwise-invm}
For each $v \in \C_b(\X)$, $\Pr^\pi_\zeta$-almost surely,
\begin{equation} \label{eq-lem-su-pathwise-invm}
  \lim_{n \to \infty} \left\{ \int_{\X \times \A} \int_{\X} v(y) \, q(dy \mid x, a) \, \bar \gamma^\omega_{n} (d(x,a)) -  \int_{\X} v(x) \, \bar p^\omega_{n}(dx)  \right\} \, =  \, 0.
\end{equation}  
\end{lemma}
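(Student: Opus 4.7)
The plan is to express both integrals appearing in (\ref{eq-lem-su-pathwise-invm}) as Cesàro averages along the sample path, and then to split the resulting difference into a martingale-difference average and a telescoping term, each of which tends to zero almost surely.

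First, by the definitions of $\bar\gamma_n^\omega$ and $\bar p_n^\omega$, I would rewrite
\begin{align*}
\int_{\X} v(x)\,\bar p_n^\omega(dx) &= \tfrac{1}{n}\textstyle{\sum_{k=1}^{n}} v(x_k),\\
\int_{\X\times\A}\!\int_{\X} v(y)\,q(dy\mid x,a)\,\bar\gamma_n^\omega(d(x,a)) &= \tfrac{1}{n}\textstyle{\sum_{k=1}^{n}}\int_{\X} v(y)\,q(dy\mid x_k,a_k).
\end{align*}
By the Markov property of the process $\{(x_n,a_n)\}$ under $\Pr^\pi_\zeta$, we have $\int v(y)\,q(dy\mid x_k,a_k)=\E^{\pi}_\zeta[v(x_{k+1})\mid\F_k]$, where $\F_k:=\sigma(x_0,a_0,\ldots,x_k,a_k)$.

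The difference in (\ref{eq-lem-su-pathwise-invm}) then becomes
\[
\tfrac{1}{n}\sum_{k=1}^{n}\Big(\E^{\pi}_\zeta[v(x_{k+1})\mid\F_k]-v(x_{k+1})\Big)\;+\;\tfrac{1}{n}\sum_{k=1}^{n}\bigl(v(x_{k+1})-v(x_k)\bigr).
\]
The second sum telescopes to $n^{-1}(v(x_{n+1})-v(x_1))$, which converges to $0$ deterministically because $v\in\C_b(\X)$ is bounded. For the first sum, the summands form a bounded martingale difference sequence relative to $\{\F_{k+1}\}$ (each term has absolute value at most $2\|v\|_\infty$), so by the strong law of large numbers for martingales (e.g., \cite[Theorem 2.18]{HaH80}, cited in the preceding Lemma~\ref{lem-su-pathwise-mp}), the Cesàro average converges to $0$ almost surely.

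Combining these two facts gives (\ref{eq-lem-su-pathwise-invm}) for the fixed $v\in\C_b(\X)$, with the $\Pr^\pi_\zeta$-null exceptional set depending on $v$. There is no real obstacle here; the argument is essentially the same as the one used in the proof of (\ref{eq-prf-su6e}) via a martingale decomposition, but with the key difference that $v$ is fixed at the outset, so we avoid the need for uniformity over $\C_b(\X)$. This is precisely why the resulting a.s.\ statement is quantified pointwise in $v$ rather than over the whole class, as already noted in the remark preceding the lemma.
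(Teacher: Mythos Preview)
Your proof is correct and essentially the same as the paper's: both decompose the difference into a bounded martingale-difference average (handled via \cite[Theorem 2.18]{HaH80}) plus a boundary term that is $O(n^{-1})$ by the boundedness of $v$. The only cosmetic difference is the direction of the index shift---you telescope $v(x_k)$ forward to $v(x_{k+1})$, while the paper shifts $\int v\,q(dy\mid x_k,a_k)$ backward to $\int v\,q(dy\mid x_{k-1},a_{k-1})$---but the resulting martingale differences and boundary terms are equivalent.
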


\begin{proof}
Using the definition of the occupancy measures $\bar \gamma^\omega_n$ and $\bar p_n^\omega$, we can rewrite (\ref{eq-lem-su-pathwise-invm}) as that $W_n \to 0$ almost surely, for
$$ W_n : = n^{-1} S_n + n^{-1} \big\{ \textstyle{ \int_\X v(y) \, q(dy \mid x_{n}, a_{n}) -  \int_\X v(y) \, q(dy \mid x_{0}, a_{0}) } \big\}$$
where
$$ S_n : = \textstyle{  \sum_{k=1}^n  \big\{ \int_\X v(y) \, q(dy \mid x_{k-1}, a_{k-1}) - v(x_k) \big\}}.$$
Then since $v$ is bounded, $S_n$ is the partial sum of a bounded Martingale difference sequence, and hence $n^{-1} S_n \to 0$ almost surely by \cite[Theorem 2.18]{HaH80}. Consequently, $W_n \to 0$ almost surely.
\end{proof}

We are now ready to prove (\ref{eq-thm2-a2}). The main proof arguments involve the use of Lusin's theorem and are essentially the same as those for Prop.~\ref{prp-su-mp}. However, the details are somewhat different, because in this case, most arguments hold only almost surely, so we need to be careful that in the proof, we exclude, in total, only a countable number of $\Pr^\pi_\zeta$-null sets on which the desired arguments do not hold. 

Let us start by specifying those sets of sample paths that will be excluded from consideration. 
The first null set to exclude is 
$$\N_0 : = \big\{ \omega \in \Omega \mid (x_n, a_n) \not\in \Gamma \ \text{for some } n \big\}.$$
Let $\{v_\ell \}$ be a sequence of functions in $\C_b(\X)$ with the property in (\ref{eq-detclass}); that is, for any two Borel probability measures $p, p'$ on $\X$,
\begin{equation} \label{eq-detclass-v}
  p = p'  \qquad \Longleftrightarrow \qquad \textstyle{\int v_\ell \, d p \, = \, \int v_\ell \, dp'}, \quad \forall \, \ell \geq 1.
\end{equation}  
The second null set to exclude is 
$$ \N_1 : = \big\{  \omega \in \Omega \, \big| \,  \text{the equality (\ref{eq-lem-su-pathwise-invm}) in Lemma~\ref{lem-su-pathwise-invm} is violated for some $v \in \{v_\ell\}$}  \big\}.$$

To define the next null set $\N_2$ to exclude, we need more notation and definitions. Recall that for $m \geq 0$, the truncated one-stage cost function $c^m(\cdot) = \min \{c(\cdot), m\}$. Let $\Z_+$ be the set of all positive integers. 
For each $j \in \Z_+$, corresponding to the compact set $\Gamma_j$ in Assumption~\ref{cond-pc-3}(SU), let $(O_j, D_j, \nu_j)$ be the open set, the closed set, and the finite measure, respectively, in Assumption~\ref{cond-pc-3}(M) for $K = \proj_\X(\Gamma_j)$; 
and let $F_j$ be the compact (finite) action set $F_j : = \proj_\A(\Gamma_j)$.
For each $j,m \in \Z_+$, choose closed subsets $B^1_{i,j,m}$ and $B^2_{i,j}$ of $\X$ for $i \in \Z_+$ such that the following hold:
\begin{enumerate}[leftmargin=0.7cm,labelwidth=!]
\item[(i)] $\nu_j \big(\X \setminus B^1_{i,j,m} \big) \leq i^{-1}$ and $\nu_j \big(\X \setminus B^2_{i,j} \big) \leq i^{-1}$;
\item[(ii)] restricted to the set $B^1_{i,j,m} \times F_j$, the function $c^m(\cdot)$ is continuous, and restricted to the set $B^2_{i,j} \times F_j$, the state transition stochastic kernel $q(dy \,|\, x,a)$ is continuous.
\end{enumerate}
This is possible by Lusin's theorem, as in the proofs of Lemmas~\ref{lem-prf-su1}-\ref{lem-prf-su2}. Now define two countable collections
\begin{equation} 
  \W_1 := \big\{ (O_j, D_j, \nu_j, B^1_{i,j,m}) \mid i,j,m \in \Z_+ \big\}, \quad   \W_2 := \big\{ (O_j, D_j, \nu_j, B^2_{i,j}) \mid i,j \in \Z_+ \big\}, \notag
\end{equation}
and let the third null set be
\begin{align*}
  \N_2 : = \Big\{   \omega \in \Omega \ \Big| \  & \text{for some $(O,D,\nu, B) \in \W_1 \cup \W_2\,$, the inequality (\ref{eq-lem-su-pathwise-mp}) } \\ 
  & \text{in Lemma~\ref{lem-su-pathwise-mp} is violated by} \ (O,D,\nu) \ \text{and} \ E = B^c  \ \Big\}.
\end{align*}  
The $\Pr^\pi_\zeta$-null set $\N : = \N_0 \cup \N_1 \cup \N_2$ will be excluded from consideration.
 
Consider now an arbitrary (fixed) $\omega \in \Omega \setminus \N$. 
Let  
$$\rho^\omega:= \liminf_{n \to \infty} \, n^{-1} \textstyle{ \sum_{k=0}^{n-1} c(x_k, a_k)}.$$
If $\rho^\omega = +\infty$, the desired inequality (\ref{eq-thm2-a2}) holds trivially, so let us suppose 
$\rho^\omega < \infty$.
Then there exists a subsequence $\{\bar \gamma_{n_k}^\omega\}$ with
$$ \lim_{k \to \infty} \int c \, d \bar \gamma_{n_k}^\omega = \rho^\omega < \infty,$$
which also implies $\sup_k \int c \, d \bar \gamma_{n_k}^\omega < \infty$. By the definition of $\N$, all $\bar \gamma_{n_k}^\omega$ are concentrated on $\Gamma$. As in the proof of Prop.~\ref{prp-su-mp}, since $c$ is strictly unbounded under Assumption~\ref{cond-pc-3}(SU), it follows that $\{\bar \gamma_{n_k}^\omega\}$, restricted to $\Gamma$, is tight in $\P(\Gamma)$, so we can extract a further subsequence, also denoted by $\{\bar \gamma_{n_k}^\omega\}$ (for notational simplicity), that converges weakly to some $\bar \gamma^\omega \in \P(\X \times \A)$ with $\bar \gamma^\omega(\Gamma) = 1$. 
Before proceeding, note that we are now working with a \emph{fixed} $\omega$; we do \emph{not} require the $n_k$'s and $\bar \gamma^\omega$ to be random variables (i.e., measurable w.r.t.\ $\Pr^\pi_\zeta$).

As before, $\bar \gamma^\omega$ can be decomposed into the marginal $\bar p^\omega$ on $\X$ and a Borel measurable stochastic kernel $\bar \mu^\omega(da \,|\, x)$ on $\A$ given $\X$ that obeys the control constraint:
$$ \bar \gamma^\omega\big(d(x,a)\big) =  \bar \mu^\omega(da \mid x) \, \bar p^\omega(dx) \quad \text{and} \quad \bar \mu^\omega\big(A(x) \mid x\big) = 1, \ \ \ \forall \, x \in \X.$$
We are going to show that $(\bar \mu^\omega, \bar p^\omega)$ is a stationary pair with average cost 
$$J(\bar \mu^\omega, \bar p^\omega) \leq \lim_{k \to \infty} \int c \, d \bar \gamma_{n_k}^\omega.$$
To this end, let us summarize some properties of $\omega$ that we will use:

\begin{lemma} \label{lem-prf-pw0}
For the preceding $\omega \in \Omega \setminus \N$, the following hold:
\begin{enumerate}[leftmargin=0.65cm,labelwidth=!]
\item[\rm (a)] 
For every $(O, D, \nu, B) \in \W_1 \cup \W_2$,
\begin{equation}
 \limsup_{k \to \infty} \, \bar p^\omega_{n_k} \big((O \setminus D)\cap B^c \big) \leq \nu(B^c), \qquad   \bar p^\omega\big( (O \setminus D)\cap B^c \big) \leq \nu(B^c). \notag
\end{equation}
\item[\rm (b)] For every $\ell \in \Z_+$,
\begin{equation}
     \lim_{n \to \infty} \left\{ \int_{\X \times \A} \int_{\X} v_{\ell}(y) \, q(dy \mid x, a) \, \bar \gamma^\omega_{n} (d(x,a)) -  \int_{\X} v_{\ell}(x) \, \bar p^\omega_{n}(dx)  \right\} \, =  \, 0. \notag
\end{equation} 
\end{enumerate}
\end{lemma}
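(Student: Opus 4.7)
The plan is to read off both (a) and (b) directly from the careful construction of the exceptional null sets $\N_1, \N_2$ and the weak convergence $\bar \gamma^\omega_{n_k} \wto \bar \gamma^\omega$; the entire effort of defining the countable families $\W_1, \W_2$ and the sequence $\{v_\ell\}$ was done precisely so that this lemma becomes a bookkeeping exercise. Nothing substantial is left to prove.

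For part (b), since $\omega \notin \N_1$, the conclusion (\ref{eq-lem-su-pathwise-invm}) of Lemma~\ref{lem-su-pathwise-invm} holds for every $v \in \{v_\ell\}_{\ell \geq 1}$; taking $v = v_\ell$ for each $\ell$ yields precisely the limit stated in (b). For the first inequality in (a), $\omega \notin \N_2$ says that for every $(O,D,\nu,B) \in \W_1 \cup \W_2$, the conclusion (\ref{eq-lem-su-pathwise-mp}) of Lemma~\ref{lem-su-pathwise-mp} holds when applied to $(O,D,\nu)$ and the Borel set $E = B^c$, which is exactly the statement $\limsup_{k} \bar p^\omega_{n_k}\big((O \setminus D)\cap B^c\big) \leq \nu(B^c)$.

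For the second inequality in (a), I pass from $\{\bar p^\omega_{n_k}\}$ to $\bar p^\omega$ via the Portmanteau theorem. Since $\bar \gamma^\omega_{n_k} \wto \bar \gamma^\omega$, projecting onto $\X$ gives $\bar p^\omega_{n_k} \wto \bar p^\omega$. By construction (items (i)--(ii) in the definitions of $\W_1, \W_2$) the set $B$ is closed, and $D$ is closed by Assumption~\ref{cond-pc-3}(M); hence $D^c$ and $B^c$ are open and $(O \setminus D) \cap B^c = O \cap D^c \cap B^c$ is open. Applying \cite[Theorem 11.1.1]{Dud02} to this open set yields
$$\bar p^\omega\big((O \setminus D)\cap B^c\big) \, \leq \, \liminf_{k \to \infty} \bar p^\omega_{n_k}\big((O \setminus D)\cap B^c\big),$$
and the first inequality of (a) then bounds the right-hand side by $\nu(B^c)$.

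The only subtlety to check is the openness of $(O \setminus D) \cap B^c$, which is exactly why the Lusin approximants $B^1_{i,j,m}$ and $B^2_{i,j}$ were required to be closed subsets of $\X$ in the definitions of $\W_1$ and $\W_2$. Given that observation, the argument is essentially immediate and should fit in a few lines.
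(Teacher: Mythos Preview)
Your proposal is correct and matches the paper's own proof essentially line for line: parts (b) and the first inequality in (a) are read off from $\omega\notin\N_1$ and $\omega\notin\N_2$ respectively, and the second inequality in (a) is obtained via the Portmanteau theorem applied to the open set $(O\setminus D)\cap B^c$, exactly as you do. The paper's proof is simply a more compressed version of what you wrote.
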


\begin{proof} 
The lemma follows directly from the definition of the null set $\N$, except for the majorization inequality for $\bar p^\omega$ in (a). For this inequality, note that $(O \setminus D)\cap B^c$ is an open set and $\bar p^\omega_{n_k} \wto \bar p^\omega$. Therefore, by \cite[Theorem 11.1.1]{Dud02} and the first relation in (a),
$$\bar p^\omega\big( (O \setminus D)\cap B^c \big) \leq \liminf_{k \to \infty} \bar p^\omega_{n_k} \big((O \setminus D)\cap B^c \big) \leq \nu(B^c).\qedhere$$
\end{proof}


The next two lemmas are the sample-path analogues of Lemmas~\ref{lem-prf-su1}-\ref{lem-prf-su2} in the proof of Prop.~\ref{prp-su-mp}, and they will lead to the desired inequality (\ref{eq-thm2-a2}).

\begin{lemma} \label{lem-prf-pw1}
For the preceding $\omega \in \Omega \setminus \N$, 
$ \int  c \, d \bar \gamma^\omega \leq \rho^\omega.$
\end{lemma}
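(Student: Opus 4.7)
The plan is to mirror the proof of Lemma~\ref{lem-prf-su1} almost verbatim, with $\{\bar\gamma_{n_k}\}$ and $\bar\gamma$ replaced by $\{\bar\gamma^\omega_{n_k}\}$ and $\bar\gamma^\omega$, and with the deterministic majorization Lemma~\ref{lem-su-mp} replaced by its sample-path analogue obtained from Lemma~\ref{lem-prf-pw0}(a). The whole point of the countable collection $\W_1$ built into the definition of $\N_2$ was to arrange this substitution; we have excluded $\N$, so Lemma~\ref{lem-prf-pw0}(a) is in force for every quadruple in $\W_1$.

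First I would truncate. Setting $c^m := \min\{c, m\}$, the monotone convergence theorem gives $\int c^m \, d\bar\gamma^\omega \uparrow \int c \, d\bar\gamma^\omega$, while $\liminf_k \int c^m \, d\bar\gamma^\omega_{n_k} \leq \lim_k \int c \, d\bar\gamma^\omega_{n_k} = \rho^\omega$, so it suffices to show $\liminf_k \int c^m \, d\bar\gamma^\omega_{n_k} \geq \int c^m \, d\bar\gamma^\omega$ for each $m \in \Z_+$. Next, for arbitrary $\epsilon > 0$, strict unboundedness (SU) together with $\sup_k \int c \, d\bar\gamma^\omega_{n_k} < \infty$ provides $j$ with $\bar\gamma^\omega_{n_k}(\Gamma_j^c) \leq \epsilon$ for every $k$; openness of $\Gamma_j^c$ and $\bar\gamma^\omega_{n_k} \wto \bar\gamma^\omega$ then yield $\bar\gamma^\omega(\Gamma_j^c) \leq \epsilon$ as well.

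Fix this $j$. Let $K_j = \proj_\X(\Gamma_j)$, $F_j = \proj_\A(\Gamma_j)$, and let $(O_j, D_j, \nu_j)$ be the data in (M) for $K_j$. Given $\delta > 0$, pick $i \in \Z_+$ with $1/i \leq \delta$ and set $B := B^1_{i,j,m}$, so that $(O_j, D_j, \nu_j, B) \in \W_1$, $\nu_j(B^c) \leq \delta$, and $c^m$ is continuous on $B \times F_j$. Since $c^m$ is also lower semicontinuous on $D_j \times \A$ (as the minimum of the lower semicontinuous function $c$ and the constant $m$), it is lower semicontinuous on the closed set $(D_j \cup B) \times F_j$. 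Exactly as in the proof of Lemma~\ref{lem-prf-su1}, a Tietze--Urysohn extension of the lower semicontinuous approximations of this restriction produces a nonnegative lower semicontinuous function $\tilde c^m$ on $\X \times \A$ bounded by $m$ that agrees with $c^m$ on $(D_j \cup B) \times F_j$. By \cite[Prop.~E.2]{HL96} and $\bar\gamma^\omega_{n_k} \wto \bar\gamma^\omega$, we have $\liminf_k \int \tilde c^m \, d\bar\gamma^\omega_{n_k} \geq \int \tilde c^m \, d\bar\gamma^\omega$.

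It remains to bound the gaps between the integrals of $c^m$ and those of $\tilde c^m$. Splitting $\big((D_j \cup B) \times F_j\big)^c$ into the pieces $(K_j \setminus (D_j \cup B)) \times F_j$ and $(K_j \times F_j)^c$, bounding $|c^m - \tilde c^m|$ by $m$, using $K_j \subset O_j$ together with $(K_j \times F_j)^c \subset \Gamma_j^c$, and invoking the two inequalities of Lemma~\ref{lem-prf-pw0}(a) for $(O_j, D_j, \nu_j, B)$ (with $E = B^c$), we obtain
\[
\limsup_{k \to \infty} \left| \int (c^m - \tilde c^m) \, d\bar\gamma^\omega_{n_k} \right| \leq m\,\nu_j(B^c) + m\,\epsilon \leq m(\delta + \epsilon),
\]
and analogously $\big|\int (c^m - \tilde c^m) \, d\bar\gamma^\omega\big| \leq m(\delta + \epsilon)$. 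Combining with the lower semicontinuous bound above yields $\liminf_k \int c^m \, d\bar\gamma^\omega_{n_k} \geq \int c^m \, d\bar\gamma^\omega - 2m(\delta + \epsilon)$, and letting $\delta, \epsilon \downarrow 0$ closes the argument. The only nontrivial point is the bookkeeping ensuring that every Lusin set we extract lies in $\W_1$; this is where the countable indexing of $\W_1$ by $(i,j,m) \in \Z_+^3$ is essential, and it is the sole obstacle distinguishing this proof from that of Lemma~\ref{lem-prf-su1}.
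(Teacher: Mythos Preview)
Your proposal is correct and follows essentially the same approach as the paper's proof: mirror Lemma~\ref{lem-prf-su1} with $\{\bar\gamma^\omega_{n_k}\}$, $\bar\gamma^\omega$ in place of $\{\bar\gamma_{n_k}\}$, $\bar\gamma$, invoke Lemma~\ref{lem-prf-pw0}(a) instead of Lemma~\ref{lem-su-mp}, and take all the Lusin sets from the pre-built countable family $\W_1$ so that the excluded null set $\N$ suffices. The only cosmetic difference is that the paper restricts $\epsilon=\delta=i^{-1}\in\Q0$ up front, whereas you allow arbitrary $\epsilon,\delta>0$ and then pick $i$ with $1/i\leq\delta$; since the choice of $j$ is an integer and the Lusin set $B^1_{i,j,m}$ is indexed by $(i,j,m)\in\Z_+^3$ either way, this changes nothing.
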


\begin{proof}
The proof is similar to that of Lemmas~\ref{lem-prf-su1}, although we will need a few changes. Besides working with $\bar \gamma_{n_k}^\omega$ and $\bar \gamma^\omega$ instead of the $\bar \gamma_{n_k}$ and $\bar \gamma$ in that proof, we will take the numbers $m, \delta, \epsilon$ in that proof to be from countable sets: let $m \in \Z_+$ and $\delta, \epsilon \in \Q0:=\{ i^{-1} \mid i \in \Z_+ \}$. 
We will also use the majorization property given by Lemma~\ref{lem-prf-pw0}(a) instead of Lemma~\ref{lem-su-mp}.

With these changes, we proceed as in the proof of Lemmas~\ref{lem-prf-su1}. In particular:
\begin{enumerate}[leftmargin=0.65cm,labelwidth=!]
\item We first replace $c$ by $c^m$, for an arbitrarily large $m \in \Z_+$. 
\item We then consider arbitrarily small $\epsilon = \delta = i^{-1} \in \Q0$, 
choose $j$ large enough so that (\ref{eq-prf-su4}) holds for $\Gamma^c_j$ and $\epsilon = i^{-1}$, 
and corresponding to the given $m, j$ and $\delta = i^{-1}$, choose from the set $\W_1$ the element $(O,D,\nu,B) = (O_j, D_j, \nu_j, B_{i,j,m}^1)$. 
\item Using the property of $(O,D,\nu,B)$ (see the definition of $B_{i,j,m}^1$), 
we construct the function $\tilde c^m$ and obtain the inequality (\ref{eq-prf-su5}) as before.
\item Using the majorization property given in Lemma~\ref{lem-prf-pw0}(a), with the same reasoning as before, 
we obtain that the inequality (\ref{eq-prf-su6d}) holds for $\bar \gamma^\omega$, and that the inequality (\ref{eq-prf-su6c}) holds for $\bar \gamma^\omega_{n_k}$ in the limit: $\limsup_{k \to \infty} \big| \int \big(c^m - \tilde c^m\big) \, d \bar \gamma_{n_k}^\omega \big| \leq m (\delta + \epsilon)$.
\end{enumerate}
Combining the results of the last two steps, we obtain
$$  \liminf_{k \to \infty} \int c^m  d \bar \gamma_{n_k}^\omega \geq \int c^m  d \bar \gamma^\omega - 2m \, ( \delta + \epsilon).$$
The lemma then follows, as before, by letting $\delta, \epsilon \to 0$ and then $m \to \infty$.
\end{proof}

\begin{lemma} \label{lem-prf-pw2}
For the preceding $\omega \in \Omega \setminus \N$, $(\bar \mu^\omega, \bar p^\omega) \in \Delta_s$.
\end{lemma}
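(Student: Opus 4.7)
The plan is to show that $\bar p^\omega$ is an invariant probability measure for the transition kernel of the Markov chain induced by $\bar \mu^\omega$, which by \cite[Prop.~11.3.2]{Dud02} is equivalent to the identity
$$
\int_{\X \times \A} \int_{\X} v(y) \, q(dy \mid x, a) \, \bar \gamma^\omega\big(d(x,a)\big) \, = \, \int_{\X} v(x) \, \bar p^\omega(dx)
$$
for every $v \in \C_b(\X)$. By the determining-class property (\ref{eq-detclass-v}), it suffices to verify this identity for $v = v_\ell$ for every $\ell \geq 1$; the reduction to the countable family $\{v_\ell\}$ is precisely why the exceptional set $\N_1$ was built into $\N$ in advance.

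Fix $\ell \geq 1$. Since $\omega \notin \N_1$, Lemma~\ref{lem-prf-pw0}(b) says that the asymptotic gap between $\int\!\int v_\ell(y) \, q(dy\mid x,a) \, \bar\gamma_n^\omega(d(x,a))$ and $\int v_\ell \, d\bar p_n^\omega$ vanishes; and because $\bar\gamma_{n_k}^\omega \wto \bar\gamma^\omega$ implies $\bar p_{n_k}^\omega \wto \bar p^\omega$, the right-hand integrals converge to $\int v_\ell \, d\bar p^\omega$. It therefore remains to prove the sample-path analog of (\ref{eq-prf-su6}), namely
$$
\lim_{k \to \infty} \int\!\int v_\ell(y) \, q(dy\mid x,a) \, \bar\gamma_{n_k}^\omega\big(d(x,a)\big) \, = \, \int\!\int v_\ell(y) \, q(dy\mid x,a) \, \bar\gamma^\omega\big(d(x,a)\big).
$$
For this I would follow the argument of Lemma~\ref{lem-prf-su2} almost verbatim, with only the following adjustments: restrict $\epsilon, \delta$ to $\Q0$; choose $j$ large enough that (\ref{eq-prf-su4}) holds both for $\bar\gamma_{n_k}^\omega$ uniformly in $k$ (using $\sup_k \int c \, d\bar\gamma_{n_k}^\omega < \infty$ and the strict unboundedness of $c$) and for $\bar\gamma^\omega$ (via \cite[Theorem~11.1.1]{Dud02} applied to the open set $\Gamma_j^c$); take the quadruple $(O_j, D_j, \nu_j, B^2_{i,j}) \in \W_2$ with $\delta = i^{-1}$, on which $q(dy \mid x, a)$ is jointly continuous on $(D_j \cup B^2_{i,j}) \times F_j$; define $\phi(x,a) := \int v_\ell(y) \, q(dy \mid x, a)$, which by \cite[Prop.~7.30]{bs} is bounded and continuous on that closed set, and extend its restriction to a $\tilde\phi \in \C_b(\X \times \A)$ with $\|\tilde\phi\|_\infty \leq \|v_\ell\|_\infty$ via Tietze--Urysohn; and finally replace the appeal to Lemma~\ref{lem-su-mp} by Lemma~\ref{lem-prf-pw0}(a) when bounding the approximation errors $|\int(\phi - \tilde\phi) \, d\bar\gamma_{n_k}^\omega|$ (in the $\limsup$) and $|\int(\phi - \tilde\phi) \, d\bar\gamma^\omega|$ by $2\|v_\ell\|_\infty(\delta + \epsilon)$. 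Sending $\delta, \epsilon \to 0$ along $\Q0$ closes the argument.

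The main subtlety, rather than a true obstacle, is the bookkeeping of null sets: the entire pointwise argument for $\omega \notin \N$ must draw on only countably many exceptional events, which is why Lemma~\ref{lem-prf-pw0} was tailored to the countable collections $\W_2$ and $\{v_\ell\}$ instead of to all $v \in \C_b(\X)$ and all Borel sets. Once this countable reduction is in place, no measurability issue arises, since for each fixed $\omega$ we are just manipulating a deterministic weakly convergent sequence of probability measures, and the Lusin-plus-Tietze--Urysohn approximation of Lemma~\ref{lem-prf-su2} transfers without further change.
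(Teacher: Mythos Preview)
Your proposal is correct and follows essentially the same approach as the paper: reduce to the countable determining family $\{v_\ell\}$, use Lemma~\ref{lem-prf-pw0}(b) together with $\bar p_{n_k}^\omega \wto \bar p^\omega$ to reduce to the sample-path analogue of (\ref{eq-prf-su6}), and then rerun the Lusin/Tietze--Urysohn argument of Lemma~\ref{lem-prf-su2} with $\epsilon,\delta\in\Q0$, the quadruple $(O_j,D_j,\nu_j,B^2_{i,j})\in\W_2$, and Lemma~\ref{lem-prf-pw0}(a) in place of Lemma~\ref{lem-su-mp}. Your remark about the countable-null-set bookkeeping is exactly the point the paper stresses as well.
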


\begin{proof}
The proof is similar to that of Lemma~\ref{lem-prf-su2}, other than a few changes like in the proof for Lemma~\ref{lem-prf-pw1}. We will let $\delta, \epsilon \in \Q0=\{ i^{-1} \mid i \in \Z_+ \}$ and work with $\bar \gamma_{n_k}^\omega$ and $\bar \gamma^\omega$ instead of the $\bar \gamma_{n_k}$ and $\bar \gamma$.
We will use Lemma~\ref{lem-prf-pw0}(a) instead of Lemma~\ref{lem-su-mp} for the needed majorization property, and we will use Lemma~\ref{lem-prf-pw0}(b) instead of (\ref{eq-prf-su6e}).

With these changes, we proceed as follows. 
To prove $\bar p^\omega$ is the invariant probability measure of the Markov chain $\{x_n\}$ induced by $\bar \mu^\omega$, it suffices to consider the countable family $\{v_\ell\} \subset \C_b(\X)$ with the property (\ref{eq-detclass-v}) and show that (\ref{eq-prf-su2}) holds for all $v \in \{v_\ell\}$: 
\begin{equation} \label{eq-prf-pw1}
\int_{\X \times \A} \int_{\X} v(y) \, q(dy \mid x, a) \, \bar \gamma^\omega (d(x,a)) = \int_{\X} v(x) \, \bar p^\omega(dx).
\end{equation}
In view of Lemma~\ref{lem-prf-pw0}(b) and the fact $\bar p_{n_k}^\omega \wto \bar p^\omega$, to prove (\ref{eq-prf-pw1}), it suffices to show that for all $v \in \{v_\ell\}$,
\begin{equation} \label{eq-prf-pw2}
\lim_{k \to \infty} \int_{\X \times \A} \int_{\X} v(y) \, q(dy \mid x, a) \, \bar \gamma_{n_k}^\omega (d(x,a)) = \int_{\X \times \A} \int_{\X} v(y) \, q(dy \mid x, a) \, \bar \gamma^\omega (d(x,a)).
 \end{equation}

We now prove (\ref{eq-prf-pw2}) for an arbitrary function $v \in \{v_\ell\}$. 
Define a function $\phi(x,a) : = \int_\X v(y) \, q(dy \mid x, a)$ on $\X \times \A$. 
Then proceed as in the proof of Lemma~\ref{lem-prf-su2}:
\begin{enumerate}[leftmargin=0.65cm,labelwidth=!]
\item Consider arbitrarily small $\epsilon = \delta = i^{-1} \in \Q0$.
Choose $j$ large enough so that (\ref{eq-prf-su4}) holds for $\Gamma_j^c$ and $\epsilon = i^{-1}$, 
and corresponding to the given $j$ and $\delta = i^{-1}$, choose from the set $\W_2$ the element $(O,D,\nu,B) = (O_j, D_j, \nu_j, B_{i,j}^2)$. 
\item Using the property of $(O,D,\nu,B)$ (see the definition of $B_{i,j}^2$),
we construct the function $\tilde \phi$ from $\phi$, and obtain the inequality (\ref{eq-prf-su7}) as before.
\item Using the majorization property given in Lemma~\ref{lem-prf-pw0}(a), with the same reasoning as before, 
we obtain that the inequality (\ref{eq-prf-su7b}) holds for $\bar \gamma^\omega$, and that the inequality (\ref{eq-prf-su7a}) holds for $\bar \gamma^\omega_{n_k}$ in the limit: $\limsup_{k \to \infty} \!\big|\! \int \big(\phi - \tilde \phi\big)  \, d \bar \gamma_{n_k}^\omega \big| \leq 2 \| v \|_\infty \cdot (\delta + \epsilon ).$
\end{enumerate}
Combining the results from the last two steps, we obtain, as before,
$$ \limsup_{k \to \infty} \left| \int \phi  \, d \bar\gamma_{n_k}^\omega  - \int  \phi \, d \bar \gamma^\omega  \, \right|  \, \leq \, 4 \| v \|_\infty \cdot (\delta + \epsilon ), $$
and by letting $\delta = \epsilon = i^{-1} \to 0$, we obtain the desired relation (\ref{eq-prf-pw2}). It follows that $(\bar \mu^\omega, \bar p^\omega)$ is a stationary pair.
\end{proof}

The inequality (\ref{eq-thm2-a2}) in Theorem~\ref{thm-su-mp2}(a) then follows:

\begin{proof}[Proof of (\ref{eq-thm2-a2}) in Theorem~\ref{thm-su-mp2}(a)]
For each $\omega \in \Omega \setminus \N$, we have either $\rho^\omega = +\infty$ or by Lemmas~\ref{lem-prf-pw1} and \ref{lem-prf-pw2},
$$\infty > \rho^\omega \geq J(\bar \mu^\omega, \bar p^\omega) \geq \rho^*.$$ 
Since the set $\N$ is $\Pr^\pi_\zeta$-null, we have that (\ref{eq-thm2-a2}) holds $\Pr^\pi_\zeta$-almost surely.
\end{proof}
 
The proof of Theorem~\ref{thm-su-mp2}(a) is now complete. 

\subsubsection{Part (b)} \label{sec-3.3b}
We now proceed to prove Theorem~\ref{thm-su-mp2}(b). The proof involves several concepts about Markov chains: maximal irreducibility measures, Harris recurrence, and $f$-regularity; these concepts are explained in Appendix~\ref{appsec-1}. 

We need some preparation for the proof of the pathwise optimality of the policy $\mu^*$ in a stationary minimum pair, as this part is not as straightforward as the rest.
By assumption the Markov chain $\{x_n\}$ induced by $\mu^*$ is positive Harris recurrent. We want to show that $\mu^*$ is pathwise average-cost optimal, and we will apply the strong law of large numbers (LLN) for positive Harris recurrent Markov chains to prove this. However, if we apply the LLN directly to the Markov chain $\{x_n\}$, unless $\mu^*$ is nonrandomized, what we get is only that for the expected one-stage cost function $c_{\mu^*}$, the limit of the average $n^{-1} \sum_{k=0}^{n-1} c_{\mu^*}(x_k)$ exists almost surely and equals $\rho^*$, for any initial state distribution. So, to deal with $n^{-1} \sum_{k=0}^{n-1} c(x_k,a_k)$, we will instead apply the LLN to the Markov chain $\{(x_n, a_n)\}$ induced by $\mu^*$ on a certain subset of the state-action space. Specifically, we first show that restricted to that set, $\{(x_n, a_n)\}$ is positive Harris recurrent; the proof uses the fact that the action space $\A$ is countable.

\begin{lemma} \label{lem-harris-rec}
Suppose $(\mu^*,p^*) \in \Delta_s$ and $\mu^*$ induces a positive Harris recurrent Markov chain on $\X$.
Then the Markov chain $\{(x_n, a_n)\}$ induced by $\mu^*$ on the space 
$\Gamma_{\mu^*} := \big\{(x, a) \in \Gamma \mid \mu^*(\{a\} \,|\, x) > 0 \big\}$
is also positive Harris recurrent.
\end{lemma}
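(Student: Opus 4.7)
The plan is to lift positive Harris recurrence from the state chain $\{x_n\}$ to the state-action chain $\{(x_n,a_n)\}$, exploiting the countability of $\A$ together with the fact that, under a stationary policy, the actions are conditionally independent given the state trajectory. First I would set up the basic objects: the transition kernel of $\{(x_n,a_n)\}$ is
$\tilde P((x,a), C) = \int_\X \mu^*\big(\{b : (y,b) \in C\} \,\big|\, y\big) \, q(dy \,|\, x,a)$;
the measure $\tilde p^*(d(x,a)) := \mu^*(da \,|\, x)\,p^*(dx)$ is an invariant probability measure for $\tilde P$ supported on $\Gamma_{\mu^*}$ (by invariance of $p^*$ under $\mu^*$); and starting from any initial distribution, after one step we have $(x_1, a_1) \in \Gamma_{\mu^*}$ almost surely, since $a_{n+1}$ is drawn from $\mu^*(\cdot \,|\, x_{n+1})$ and hence $\mu^*(\{a_{n+1}\} \,|\, x_{n+1}) > 0$ a.s.

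The heart of the argument is to verify Harris recurrence with $\tilde p^*$ as an irreducibility measure. Take any $C \in \B(\Gamma_{\mu^*})$ with $\tilde p^*(C) > 0$, and for each $a \in \A$ set $C_a := \{x \in \X : (x,a) \in C\}$. Since
$\tilde p^*(C) = \sum_{a \in \A} \int_{C_a} \mu^*(\{a\} \,|\, x)\, p^*(dx)$,
the countability of $\A$ yields an action $a^* \in \A$ and an $\epsilon > 0$ such that the ``thick slice''
$D := \{x \in C_{a^*} : \mu^*(\{a^*\} \,|\, x) \geq \epsilon\}$
has $p^*(D) > 0$. Because $\{x_n\}$ is positive Harris recurrent with invariant probability $p^*$, it visits $D$ infinitely often almost surely from every initial state; let $\tau_1 < \tau_2 < \cdots$ denote the successive visit times of $\{x_n\}$ to $D$.

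Finally, I would invoke the fact that, conditional on the trajectory $\{x_n\}$, the actions $\{a_n\}$ are independent with $a_n$ having law $\mu^*(\cdot \,|\, x_n)$. Since each $\tau_k$ is measurable with respect to $\sigma(\{x_n\})$, conditioning on $\{x_n\}$ the events $\{a_{\tau_k} = a^*\}_{k \geq 1}$ form an independent Bernoulli sequence with conditional success probabilities $\mu^*(\{a^*\} \,|\, x_{\tau_k}) \geq \epsilon$. By the classical second Borel--Cantelli lemma applied under the conditional measure, $(x_{\tau_k}, a_{\tau_k}) \in D \times \{a^*\} \subset C$ infinitely often almost surely from every initial state, giving Harris recurrence of $\{(x_n,a_n)\}$ on $\Gamma_{\mu^*}$; combined with the finite invariant measure $\tilde p^*$ this yields positive Harris recurrence. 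The one place requiring care is the passage through the random hitting times $\tau_k$, but this is clean precisely because the $\tau_k$'s lie in $\sigma(\{x_n\})$, under which the actions remain conditionally independent; the countability of $\A$ is what permits the slicing $C = \bigsqcup_a (C_a \times \{a\})$ that launches the whole argument.
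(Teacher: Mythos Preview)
Your overall strategy---slice $C$ using countability of $\A$ to find a set $D$ of states where a fixed action $a^*$ is chosen with probability at least $\epsilon$, use Harris recurrence of $\{x_n\}$ to visit $D$ infinitely often, then run a Borel--Cantelli argument---is essentially the paper's strategy as well. The gap is in the Borel--Cantelli step: your claim that, conditional on the entire state trajectory $\{x_n\}_{n\geq 0}$, the action $a_n$ has law $\mu^*(\cdot\,|\,x_n)$ is false. Knowing $x_{n+1}$ biases the posterior of $a_n$. Indeed, the joint law factorizes so that $a_n$ is conditionally independent of the other actions given $\{x_m\}$, but its conditional law is proportional to $\mu^*(\{a\}\,|\,x_n)\,q(dx_{n+1}\,|\,x_n,a)$, not to $\mu^*(\{a\}\,|\,x_n)$ alone. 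In particular there is no reason the conditional probability of $\{a_{\tau_k}=a^*\}$ given $\{x_m\}$ should be bounded below by $\epsilon$; a simple deterministic-transition example (e.g., two states, two actions, each action leading to a different state) shows it can be $0$ or $1$.

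The fix is to condition only on the past, not the future. With $\F_{n-1}=\sigma(x_0,a_0,\ldots,x_{n-1},a_{n-1})$ one has
\[
\Pr_{(x,a)}^{\mu^*}\big\{(x_n,a_n)\in D\times\{a^*\}\,\big|\,\F_{n-1}\big\}
=\int_D \mu^*(\{a^*\}\,|\,y)\,q(dy\,|\,x_{n-1},a_{n-1})
\ \geq\ \epsilon\,q(D\,|\,x_{n-1},a_{n-1}),
\]
and Harris recurrence of $\{x_n\}$ together with the extended (L\'evy) Borel--Cantelli lemma gives $\sum_n q(D\,|\,x_{n-1},a_{n-1})=\infty$ a.s., hence $\sum_n \Pr_{(x,a)}^{\mu^*}\{(x_n,a_n)\in D\times\{a^*\}\,|\,\F_{n-1}\}=\infty$ a.s., hence $(x_n,a_n)\in D\times\{a^*\}\subset C$ i.o.\ a.s. This is exactly how the paper proceeds (with the minor variant that it slices via the sections $B_x=\{a:(x,a)\in B\}$ and the sets $E_m=\{x:\mu^*(B_x\,|\,x)\geq 1/m\}$ rather than fixing a single action). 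Your instinct that ``the $\tau_k$'s lie in $\sigma(\{x_n\})$'' would save the day is the right instinct, but it must be deployed with a predictable filtration and the extended Borel--Cantelli lemma, not with conditioning on the full state path.
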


\begin{proof}
To simplify notation, in this proof we write $\Pr^{\mu^*}$ as $\Pr$ instead, dropping the superscript. The set $\Gamma_{\mu^*}$ is formed by simply excluding, for each state, those actions that $\mu^*$ will never take at that state (this makes sense since $\A$ is countable).
Since $\mu^*(da\,|\,x)$ is a Borel measurable stochastic kernel, $\Gamma_{\mu^*}$ is Borel. Let $\gamma^* \in \P(\X \times \A)$ be given by $\gamma^*(d(x,a)) : = \mu^*(da \,|\, x) \, p^*(dx)$. Clearly, $\gamma^*(\Gamma_{\mu^*}) = 1$ and for all initial states $x$, $\Pr_x\big\{ (x_n, a_n) \not\in \Gamma_{\mu^*} \ \text{for some} \ n\big\} = 0$. So we can treat $\Gamma_{\mu^*}$ as the state space of the Markov chain $\{(x_n,a_n)\}$ induced by $\mu^*$.

Under the positive Harris recurrence assumption on $\{x_n\}$, $\gamma^*$ is the unique invariant probability measure of the Markov chain $\{(x_n, a_n)\}$ on $\Gamma_{\mu^*}$. To prove that this Markov chain is positive Harris recurrent, it suffices to show that for every initial state-action pair $(x, a) \in \Gamma_{\mu^*}$,
\begin{equation} \label{eq-prf-harris1}
\Pr_{(x,a)} \big\{ (x_n, a_n) \in B \ i.o. \, \big\} = 1, \quad \forall \, B \in \B\big(\Gamma_{\mu^*}\big) \ \text{with} \  \gamma^*(B) > 0
\end{equation}
(``i.o.''~stands for ``infinitely often''), 
because this will imply both that the Markov chain is $\gamma^*$-irreducible---then being the invariant probability measure, $\gamma^*$ is necessarily a maximal irreducibility measure~\cite[Cor.~5.2]{Num84}---and that it is Harris recurrent.

To prove (\ref{eq-prf-harris1}), consider an arbitrary set $B \in \B\big(\Gamma_{\mu^*}\big)$ with $\gamma^*(B) > 0$. 
For $x \in \proj_\X(B)$, let $B_{x} : = \{ a \in \A \, | \, (x, a) \in B\}$. The definition of $\Gamma_{\mu^*}$ implies that $\mu^*(B_x \mid x) > 0$ for all $x \in \proj_\X(B)$. 
For $m \geq 1$, define sets
$$ E_m : = \big\{ x \in \X \mid \mu^*(B_x \mid x) \geq 1/m \big\}, \qquad B_m : = \big\{ (x, a) \in B \mid x \in E_m \big\}.$$
These sets increase as $m$ increases, with $E_m \uparrow \text{proj}_{\X}(B)$ and $B_m \uparrow B$. 
Then, since $\gamma^*(B) > 0$, for some sufficiently large $m$, $\gamma^*(B_m) > 0$ and $p^*(E_m) > 0$. 
Consider this $m$. Since $\{x_n\}$ is Harris recurrent and as its invariant probability measure, $p^*$ is a maximal irreducibility measure of $\{x_n\}$~\cite[Cor.~5.2]{Num84}, by \cite[Theorem 9.1.4]{MeT09}, $p^*(E_m) > 0$ implies
$$ \Pr_x \big\{ x_n \in E_m \ i.o. \, \big\} = 1, \qquad \forall \, x \in \X.$$
There are only a countable number of actions that the policy can possibly take at an initial state $x$. Therefore, by the Markov property and by the definition of $\Gamma_{\mu^*}$, for every initial state-action pair $(x,a) \in \Gamma_{\mu^*}$,
$$ \Pr_{(x,a)} \big\{ x_n \in E_m \ i.o. \, \big\} = 1.$$
By the extended Borel-Cantelli lemma (see e.g., \cite[Corollary~2.3, p.~32]{HaH80}), 
this implies
\begin{equation} \label{eq-prf-harris2}
  \textstyle{ \sum_{n=1}^\infty \Pr_{(x,a)} \big\{ x_n \in E_m \mid \F_{n-1} \big\} = + \infty}, \quad \text{$\Pr_{(x,a)}$-almost surely},
\end{equation}
where $\F_{n-1}$ is the $\sigma$-algebra generated by $(x_k, a_k)$, $k \leq n-1$.
Now by the Markov property, for $n \geq 1$, $\Pr_{(x,a)} \big\{ x_n \in E_m \mid \F_{n-1} \big\} = q(E_m \, |\, x_{n-1}, a_{n-1})$ and
\begin{align*} 
\Pr_{(x,a)} \big\{ (x_n,a_n) \in B_m \mid \F_{n-1} \big\} & = \int_{E_m}  \mu^*(B_y \mid y) \, q(dy \mid x_{n-1}, a_{n-1}) \\
& \geq m^{-1} q(E_m \, |\, x_{n-1}, a_{n-1}),
\end{align*} 
where the equality (inequality) follows from the definition of the set $B_m$ ($E_m$). Hence, by (\ref{eq-prf-harris2}), $\Pr_{(x,a)}$-almost surely,
$$   \textstyle{ \sum_{n=1}^\infty \Pr_{(x,a)} \big\{ (x_n,a_n) \in B_m \mid \F_{n-1} \big\}  \geq m^{-1}  \sum_{n=1}^\infty \Pr_{(x,a)} \big\{ x_n \in E_m \mid \F_{n-1} \big\} = + \infty}.$$
This implies, by the extended Borel-Cantelli lemma \cite[Corollary~2.3, p.~32]{HaH80}, 
that
$$ \Pr_{(x,a)} \big\{ (x_n,a_n) \in B_m \ i.o. \, \big\} = 1.$$ 
Since $B_m \subset B$, the desired relation (\ref{eq-prf-harris1}) holds.
\end{proof}

\begin{proof}[Proof of Theorem~\ref{thm-su-mp2}(b)]
For a minimum pair $(\mu^*, p^*) \in \Delta_s$, its average cost is 
$$\int_\X \int_\A c(x, a) \, \mu^*(da \,|\, x) \, p^*(dx) = \rho^* < \infty.$$ 
Then, under the positive Harris recurrence assumption in the theorem, by Lemma~\ref{lem-harris-rec} and the LLN for positive Harris Markov chains~\cite[Theorem 17.1.7]{MeT09}, starting from any initial state-action pair $(x,a) \in \Gamma_{\mu^*}$,
$$ \lim_{n \to \infty}  \, n^{-1} \textstyle{ \sum_{k=0}^{n-1} c(x_k, a_k)} = \rho^*, \qquad \text{$\Pr^{\mu^*}_{(x,a)}$-almost surely}.$$
This together with the definition of $\Gamma_{\mu^*}$ implies that for all initial state distribution $\zeta \in \P(\X)$,
$$ \lim_{n \to \infty}  \, n^{-1} \textstyle{ \sum_{k=0}^{n-1} c(x_k, a_k)} = \rho^*, \qquad \text{$\Pr^{\mu^*}_\zeta$-almost surely}.$$
This proves (\ref{eq-thm2-a3}). Then, in view of the inequality (\ref{eq-thm2-a2}) from Theorem~\ref{thm-su-mp2}(a) proved earlier, it follows that $\mu^*$ is pathwise average-cost optimal.

When the Markov chain $\{x_n\}$ induced by $\mu^*$ is $f$-regular with $f = (c_{\mu^*} + 1)$ being finite-valued, by \cite[Theorem 14.3.6(ii)]{MeT09} for $f$-regular Markov chains, we have that 
$$\textstyle{J(\mu^*,x) = \iJ(\mu^*, x) = \int c_{\mu^*} d p^* = \rho^*}, \qquad \forall \, x \in \X.$$ 
Then, by (\ref{eq-thm2-a1}) from Theorem~\ref{thm-su-mp2}(a), $\mu^*$ is strongly average-cost 
optimal.
\end{proof}

\begin{appendices}

\section{$\psi$-Irreducible, Harris Recurrent, and Regular Markov Chains} \label{appsec-1}
In this appendix we explain the concepts of $\psi$-irreducible, Harris recurrent, and $f$-regular Markov chains. We refer the reader to the book \cite{MeT09} for further information about these Markov chains.

Consider a Borel space $X$. Let $\phi$ be a nontrivial $\sigma$-finite Borel measure on $X$. A Markov chain $\{x_n\}$ on $X$ is called \emph{$\phi$-irreducible} iff for every $B \in \B(X)$, 
$$\phi(B) > 0 \quad \Longrightarrow \quad  \Pr_x \{ \tau_B < \infty \} > 0,$$ 
where $\tau_B : = \min \{ n \geq 1 \mid x_n \in B \}$ ($\infty$ if the set is empty) is the first return time to the set $B$, 
and $\Pr_x \{ E \}$ denotes the probability of the event $E$ given the initial state $x_0=x$. 
A $\phi$-irreducible Markov chain has a \emph{maximal irreducibility measure} $\psi$---a $\sigma$-finite measure with the property that for every $\phi$ such that the Markov chain is $\phi$-irreducible, $\phi$ is absolutely continuous w.r.t.\ $\psi$. ($\psi$ is not unique and can be chosen to be a finite or probability measure. See~\cite[Prop.~4.2.2]{MeT09} for more details.) A useful fact is that if a $\phi$-irreducible Markov chain admits an invariant probability measure, that probability measure must also be a maximal irreducibility measure for the Markov chain~\cite[Cor.~5.2, p.~74]{Num84}.
Below, the notation $\psi$ will always stand for a maximal irreducibility measure. 
For a $\psi$-irreducible Markov chain, define 
$\B^+(X): = \big\{ B \in \B(X) \mid \psi(B) > 0 \big\}$
(maximal irreducibility measures are equivalent to each other, so $\B^+(X)$ does not depend on the choice of $\psi$).
 
A $\psi$-irreducible Markov chain $\{x_n\}$ on $X$ is called \emph{positive Harris recurrent} iff it satisfies the following~\cite[Chap.\ 9, p.\ 199; Chap.\ 10, p.\ 231]{MeT09}: 
\begin{enumerate}[leftmargin=0.75cm,labelwidth=!]
\item[(i)] It has an invariant probability measure (necessarily unique). 
\item[(ii)] For every $B \in \B^+(X)$, 
$\Pr_x \{ x_n \in B \ \text{i.o.} \} = 1$ for all $x \in B$,
where the abbreviation ``i.o.''~stands for ``infinitely often.''
\end{enumerate}
For $\psi$-irreducible Markov chains, the property (ii) defines a \emph{Harris recurrent} Markov chain, and it is equivalent to that 
$\Pr_x \{ x_n \in B \ \text{i.o.} \} = 1$ for all $x \in X$,
or that 
$\Pr_x \{ \tau_B < \infty  \} = 1$ for all $x \in B$, 
for every $B \in \B^+(X)$, 
despite that the former (latter) requirement seems stronger (weaker) than (ii) (see \cite[Prop.~9.1.1 and Thm.~9.1.4]{MeT09}).

Stronger than positive Harris recurrence is the $f$-regularity property \cite[Chap.~14, p.~339]{MeT09}. Let $f: X \to [1, \infty)$. A $\psi$-irreducible Markov chain $\{x_n\}$ is called \emph{$f$-regular} iff there exists a countable cover of $X$ by $f$-regular sets, where an \emph{$f$-regular} set $C \subset X$ is a Borel measurable set that satisfies 
$$ \textstyle{\sup_{x \in C} \, \E_x \Big[ \sum_{n=0}^{\tau_B -1} f( x_n) \Big]} < \infty, \qquad \forall \, B \in \B^+(X).$$
In the case $f(\cdot) \equiv 1$, the Markov chain is simply called \emph{regular} \cite[Chap.~11]{MeT09}.

Positive Harris recurrent and $f$-regular Markov chains have strong ergodic properties. 
The two ergodic theorems that we use in this paper are~\cite[Theorem 14.3.6(ii) and Theorem 17.1.7]{MeT09} (see the proof of Theorem~\ref{thm-su-mp2}(b) near the end of Section~\ref{sec-3.3b}).

\end{appendices}

\section*{Acknowledgments}
The author thanks Professor Eugene Feinberg, Dr.\ Martha Steenstrup, and the anonymous reviewers for their helpful comments.

\addcontentsline{toc}{section}{References} 
\bibliographystyle{siamplain} 
\let\oldbibliography\thebibliography
\renewcommand{\thebibliography}[1]{%
  \oldbibliography{#1}%
  \setlength{\itemsep}{0pt}%
}
{\fontsize{9}{11} \selectfont
\bibliography{acMDP_minpair_bib}}

\end{document}